 \newtheorem{theorem}{Theorem}
 \newtheorem{proposition}[theorem]{Proposition}
\theoremstyle{definition}
 \newtheorem{definition}[theorem]{Definition}
\theoremstyle{remark}
\begin{document}
\title[Bargmann transform]{Bargmann transform on the space of hyperplanes}
\author[H.~Chihara]{Hiroyuki Chihara}
\address{College of Education, University of the Ryukyus, Nishihara, Okinawa 903-0213, Japan}
\email{aji@rencho.me}
\thanks{Supported by the JSPS Grant-in-Aid for Scientific Research \#19K03569.}
\subjclass[2010]{Primary 58J40, Secondary 44A12, 53D22}
\keywords{Radon transform, Bargmann transform, Fourier integral operator, analytic wave front set}
\begin{abstract}
We introduce an integral transform on the space of hyperplanes applying the Plancherel formula of the Radon transform to the definition of the semiclassical Bargmann transform on the Euclidean space. This is similar to the semiclassical Bargmann transform and some basic facts on microlocal analysis are also discussed. 
\end{abstract}
\maketitle
\section{Introduction}
\label{section:introduction}
In the present paper we introduce semiclassical holomorphic Fourier integral operators on the space of hyperplanes in the $n$-dimensional Euclidean space $\mathbb{R}^n$ with $n \geqq 2$, and discuss some basic properties of them. These operators are related to the Bargmann transform and the Radon transform on $\mathbb{R}^n$. The purpose of the present paper is to observe the microlocal analysis of the Radon transform in view of analytic microlocal analysis introduced by Sj\"ostrand in \cite{Sjoestrand1}. Throughout the present paper we denote by $h>0$ the semiclassical parameter. The unit of imaginary number $\sqrt{-1}$ is denoted by $i$. For $z=(z_1,\dotsc,z_n)$ and $\zeta=(\zeta_1,\dotsc,\zeta_n)$ in $\mathbb{C}^n$, set
$$
z\zeta=\langle{z,\zeta}\rangle
:=
z_1\zeta_1+\dotsb+z_n\zeta_n,
\quad
z^2
=
\langle{z,z}\rangle, 
\quad
\lvert{z}\rvert^2
:=
\langle{z,\bar{z}}\rangle.
$$
The Bargmann transform of a tempered distribution $u$ on $\mathbb{R}^n$ is defined by 
$$
\mathcal{T}_hu(z)
:=
2^{n/4}
h^{-3n/4}
\int_{\mathbb{R}^n} 
e^{-\pi(z-y)^2/h}
u(y)
dy,
\quad
z\in\mathbb{C}^n. 
$$
We shall present the basic properties of $\mathcal{T}_h$ in Section~\ref{section:origin}.
\par
Following Helgason's celebrated textbook \cite{Helgason}, 
we begin by recalling the well-known facts on the Radon transform. 
After that we introduce our integral transform on the space of hyperplanes 
and state our results of the present paper.
Let us recall the parametrization of hyperplanes in $\mathbb{R}^n$ 
to define the Radon transform on $\mathbb{R}^n$. 
Denote by $\mathbb{S}^{n-1}$ 
the unit sphere in $\mathbb{R}^n$ with its center at the origin. 
For $(\omega,t) \in \mathbb{S}^{n-1}\times\mathbb{R}$, set 
$$
H(\omega,t)
:=
\{x\in\mathbb{R}^n\ : \ x\omega=t\}
=
\{x\in\mathbb{R}^n\ : \ (x-t\omega)\omega=0\}. 
$$
This is a hyperplane passing through $t\omega$ 
and perpendicular to $\omega$ in $\mathbb{R}^n$. 
The absolute value $\lvert{t}\rvert$ is the distance between $H(\omega,t)$ and the origin. 
Note that $H(-\omega,-t)=H(\omega,t)$. 
Denote the set of all hyperplanes in $\mathbb{R}^n$ by $\mathbb{P}^n$. 
Then $\mathbb{S}^{n-1}\times\mathbb{R}$ 
is a double covering of $\mathbb{P}^n$, 
and $\mathbb{P}^n$ 
is regarded as the set of all equivalence classes of 
$\mathbb{S}^{n-1}\times\mathbb{R}$ 
provided that $(\omega,t)$ and $(-\omega,-t)$ are identified. 
\par
Here we introduce the Radon transform on $\mathbb{R}^n$. 
Denote by $L^1_\text{loc}(\mathbb{R}^n)$ 
the set of all locally integrable functions on $\mathbb{R}^n$. 
For $f(x) \in L^1_\text{loc}(\mathbb{R}^n)$ 
satisfying 
$f(x)=\mathcal{O}(\lvert{x}\rvert^{-(n-1)-\delta})$ 
as $\lvert{x}\rvert \rightarrow \infty$ 
with some $\delta>0$, 
we define the Radon transform of $f$ by 
$$
\mathcal{R}f(\omega,t)
:=
\int_{H(\omega,t)}
f(x)
dm(x)
=
\int_{\omega^\perp}
f(t\omega+y)
dm(y),
\quad
(\omega,t)\in\mathbb{S}^{n-1}\times\mathbb{R}, 
$$
where $m$ is the Lebesgue measure on hyperplanes induced by 
the standard Euclidean metric on $\mathbb{R}^n$, 
and $\omega^\perp$ is the orthogonal complement of 
$\{\omega\}$ in $\mathbb{R}^n$. 
$\mathcal{R}f(\omega,t)$ is a function on $\mathbb{P}^n$, 
that is, $\mathcal{R}f(-\omega,-t)=\mathcal{R}f(\omega,t)$ holds. 
Similarly, we say that $U(\omega,t)$ is a function on 
$\widetilde{\mathbb{P}^n}$ if 
$U(\omega,t)$ is a function on $\mathbb{S}^{n-1}\times\mathbb{R}$ 
satisfying $U(-\omega,-t)=(-1)^{n-1}U(\omega,t)$, that is, 
$U(-\omega,-t)=U(\omega,t)$ for odd $n$ and 
$U(-\omega,-t)=-U(\omega,t)$ for even $n$. 
Let us denote by $\mathscr{E}^\prime(\Omega)$ 
the set of all compactly supported distributions on a space $\Omega$. 
The Radon transform $\mathcal{R}$ can be extended to any 
$u \in \mathscr{E}^\prime(\mathbb{R}^n)$ 
and $\mathcal{R}u$ belongs to $\mathscr{E}^\prime(\mathbb{P}^n)$. 
See \cite[Definition in page 28]{Helgason}. 
\par
We now introduce some function spaces. 
Let us denote by $\mathscr{S}(\mathbb{R}^n)$ 
the Fr\'echet space of all rapidly decreasing functions on $\mathbb{R}^n$, 
and let us denote by $\mathscr{S}^\prime(\mathbb{R}^n)$ 
its topological dual, that is, 
the set of all tempered distributions on $\mathbb{R}^n$. 
Moreover we denote by $\mathscr{S}(\mathbb{S}^{n-1}\times\mathbb{R})$ 
the set of all smooth functions on $\mathbb{S}^{n-1}\times\mathbb{R}$ 
satisfying 
$$
\sup_{\mathbb{S}^{n-1}\times\mathbb{R}}
\left\lvert
t^lP
\frac{\partial^kU}{\partial t^k}(\omega,t)
\right\rvert
<\infty
$$
for any $k,l=0,1,2,\dotsc$ and 
for any differential operator $P$ of order $0,1,2,\dotsc$ 
on $\mathbb{S}^{n-1}$ with smooth coefficients. 
These suprema form a family of continuous seminorms of 
$\mathscr{S}(\mathbb{S}^{n-1}\times\mathbb{R})$. 
Its topological dual is denoted by 
$\mathscr{S}^\prime(\mathbb{S}^{n-1}\times\mathbb{R})$. 
If we replace $\mathbb{S}^{n-1}\times\mathbb{R}$ 
by $\mathbb{P}^n$ or $\widetilde{\mathbb{P}^n}$, 
we can define 
$\mathscr{S}(\mathbb{P}^n)$ 
and 
$\mathscr{S}^\prime(\mathbb{P}^n)$, 
or 
$\mathscr{S}(\widetilde{\mathbb{P}^n})$ 
and 
$\mathscr{S}^\prime(\widetilde{\mathbb{P}^n})$ 
similarly. 
Throughout the present paper we only deal with distributions 
which are independent of the semiclassical parameter $h>0$. 
\par
Next we recall the inversion formula of the Radon transform. 
It is well-known that the range of the Radon transform of 
$\mathscr{S}(\mathbb{R}^n)$ is characterized as 
$\mathscr{S}_\text{H}(\mathbb{P}^n)$,  
which is the set of all $U(\omega,t) \in \mathscr{S}(\mathbb{P}^n)$ 
satisfying the moment condition that  
$$
\int_{-\infty}^\infty
U(\omega,t)
t^k
dt
$$
is a homogeneous polynomial of degree $k$ 
in $\omega\in\mathbb{S}^{n-1}$ for all $k=0,1,2,\dotsc$. 
Then the Radon transform $\mathcal{R}$ is a linear bijection of 
$\mathscr{S}(\mathbb{R}^n)$ onto $\mathscr{S}_\text{H}(\mathbb{P}^n)$, 
and the inversion formula is given by 
$$
u
=
C_n^{-1}(-\Delta_{\mathbb{R}^n})^{(n-1)/2}\mathcal{R}^\ast\mathcal{R}u
=
C_n^{-1}\mathcal{R}^\ast\bigl(\lvert{D_t}\rvert^{(n-1)/2}\mathcal{R}u\bigr)
$$
for $u\in\mathscr{S}(\mathbb{R}^n)$, 
where 
\begin{equation}
C_n
=
(4\pi)^{(n-1)/2}
\Gamma(n/2)\Gamma(1/2), 
\label{equation:constant}
\end{equation}
$\Gamma(\cdot)$ is the gamma function, 
$\mathcal{R}^\ast$ is the dual Radon transform defined by 
$$
\mathcal{R}^\ast U(x)
:=
\int_{\mathbb{S}^{n-1}}
U(\omega,x\omega)
d\omega
$$
for $U(\omega,t) \in C(\mathbb{P}^n)$, 
$d\omega$ is the standard Lebesgue measure on $\mathbb{S}^{n-1}$, 
and 
$$
\Delta_{\mathbb{R}^n}
:=
\sum_{j=1}^n
\frac{\partial^2}{\partial x_j^2},
\quad
\lvert{D_t}\rvert
:=
\left(-\frac{d^2}{dt^2}\right)^{1/2}. 
$$
\par
We now introduce our integral transform on 
$\widetilde{\mathbb{P}^n}$, 
which is the main object of the present paper. 
For $U \in \mathscr{S}^\prime(\widetilde{\mathbb{P}^n})$, 
set 
$$
\mathcal{B}_hU(z)
:=
A_nh^{-3n/4}
\iint_{\mathbb{P}^n}
e^{-\pi(z\omega-t)^2/h}
H_{n-1}\left(\sqrt{\frac{\pi}{h}}(z\omega-t)\right)
U(\omega,t)
d\omega
dt,
\quad
z\in\mathbb{C}^n,
$$
where $A_n$ is the constant defined by 
$$
A_{2k+1}
=\frac{(-1)^kk!}{2^{k/2-1/4}(2k)!},
\quad
A_{2k}
=
\frac{(-1)^{k-1/2}\pi^{1/2}}{2^{3k/2}(k-1)!},
\quad
k=1,2,3,\dotsc,
$$
$H_l(s)$ is the Hermite polynomial of $s\in\mathbb{C}$ of order 
$l=1,2,3,\dotsc$
defined by Rodrigues' formula or the explicit formula
$$
H_l(s)
=
(-1)^l
e^{s^2}
\frac{d^l}{ds^l}
e^{-s^2}
=
l!
\sum_{j=0}^{[l/2]}
\frac{(-1)^j}{j!(l-2j)!}
(2s)^{l-2j}, 
$$
and $[\cdot]$ is the floor function, that is, 
$$
[w]
:=
\max\{k\in\mathbb{Z}\ : \ k \leqq w\},
\quad w\in\mathbb{R}. 
$$
Note that 
$H_{n-1}(\sqrt{\pi}(z\omega-t)/\sqrt{h})$ 
is a function on $\widetilde{\mathbb{P}^n}$ 
and 
$H_{n-1}(\sqrt{\pi}(z\omega-t)/\sqrt{h})U(\omega,t)$ 
is in $\mathscr{S}^\prime(\mathbb{P}^n)$ 
for any fixed $z\in\mathbb{C}^n$. 
So the integration $\mathcal{B}_hU(z)$ is well-defined for any fixed $z\in\mathbb{C}^n$ 
since $e^{-\pi(z\omega-t)^2/h} \in \mathscr{S}(\mathbb{P}^n)$. 
Moreover $\mathcal{B}_hU(z)$ is holomorphic in $\mathbb{C}^n$. 
Set 
$\phi(z,\omega,t):=i\pi(z\omega-t)^2$ 
and 
$a_n(z,\omega,t):=H_{n-1}(\sqrt{\pi}(z\omega-t)/\sqrt{h})$ 
for short. 
Then $\mathcal{B}_h$ is given by 
$$
\mathcal{B}_hU(z)
=
A_nh^{-3n/4}
\iint_{\mathbb{P}^n}
e^{i\phi(z,\omega,t)/h}
a_n(z,\omega,t)
U(\omega,t)
d\omega
dt. 
$$
We make use of the theory of semiclassical Fourier integral operators 
with complex phase functions and analytic microlocal analysis 
originated by Sj\"ostrand in \cite{Sjoestrand1}. 
See also \cite{Sjoestrand2} and \cite{HitrikSjoestrand}. 
Denote by $\zeta$ local coordinates of $\mathbb{P}^n$. 
More precisely, for 
$$
(\omega,t)=(\omega_1,\dotsc,\omega_n,t)\in\mathbb{S}^{n-1}\times\mathbb{R}
$$  
if $\omega_1^2+\dotsb+\omega_{n-1}^2<1/2$, 
then $\omega_n=\pm\sqrt{1-\omega_1^2-\dotsb-\omega_{n-1}^2}$ and we choose 
$\zeta=(\omega_1,\dotsc.\omega_{n-1},t)$ as local coordinates. 
We often express the real and imaginary parts of $z\in\mathbb{C}^n$ 
by $x\in\mathbb{R}^n$ and $-\xi\in\mathbb{R}^n$ respectively, 
that is, $z=x-i\xi$.   
\par
In what follows we state our results. 
Someone might be wondering, what is $\mathcal{B}_h$ in the first place? 
The definition of $\mathcal{B}_h$ 
comes from the Radon transform $\mathcal{R}$ and 
the Bargmann transform $\mathcal{T}_h$ on $\mathbb{R}^n$. 
The relationship between 
$\mathcal{T}_h$, $\mathcal{R}$ and $\mathcal{B}_h$ 
are the following. 
\begin{theorem}
\label{theorem:br}
For $u \in \mathscr{E}^\prime(\mathbb{R}^n)$, 
if $n$ is odd then $\mathcal{T}_hu=\mathcal{B}_h\mathcal{R}u$, 
and if $n$ is even then $\mathcal{T}_hu=\mathcal{B}_h\mathcal{H}\mathcal{R}u$, 
where $\mathcal{H}$ is the Hilbert transform in $t\in\mathbb{R}$ defined by
$$
\mathcal{H}U(\omega,t)
:=
\operatorname{PV}
\int_{-\infty}^\infty
\frac{U(\omega,s)}{t-s}
ds
=
\lim_{\varepsilon\downarrow0}
\int_{\lvert{t-s}\rvert>\varepsilon}
\frac{U(\omega,s)}{t-s}
ds.
$$
Note that if $U(-\omega,-t)=U(\omega,t)$, 
then $\mathcal{H}U(-\omega,-t)=-\mathcal{H}U(\omega,t)$. 
\end{theorem}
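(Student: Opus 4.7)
The natural plan is to substitute the Radon inversion formula for $u$ into the definition of $\mathcal{T}_hu$ and to identify what remains with either $\mathcal{B}_h\mathcal{R}u$ or $\mathcal{B}_h\mathcal{H}\mathcal{R}u$. One reduces first to $u\in C_c^\infty(\mathbb{R}^n)$, where the inversion formula applies, and extends to $u\in\mathscr{E}^\prime(\mathbb{R}^n)$ at the end; this last step is routine because, for each fixed $z$, both sides are continuous linear functionals of $u$ on $\mathscr{E}^\prime(\mathbb{R}^n)$.

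Starting from $u=C_n^{-1}\mathcal{R}^\ast(|D_t|^{n-1}\mathcal{R}u)$, the form of the inversion obtained from the intertwining $(-\Delta)^{(n-1)/2}\mathcal{R}^\ast=\mathcal{R}^\ast|D_t|^{n-1}$, insert into the Bargmann integral, unfold $\mathcal{R}^\ast$ as $\int_{\mathbb{S}^{n-1}}d\omega$, and swap the order of integration by Fubini. For fixed $\omega$, decompose $y=t\omega+y'$ with $y'\in\omega^\perp$. The key algebraic identity is
$$(z-t\omega-y')^2=(z\omega-t)^2+(w-y')^2,\qquad w:=z-(z\omega)\omega,$$
which follows from $w\omega=0$ and $y'\omega=0$. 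The inner Gaussian integral in $y'$ then reduces to
$$\int_{\omega^\perp}e^{-\pi(w-y')^2/h}\,dm(y')=h^{(n-1)/2}$$
by a standard complex contour shift (justified by Gaussian decay on every real translate), and this collapses the $y$-integral to a single integration over $\mathbb{S}^{n-1}\times\mathbb{R}$ of $e^{-\pi(z\omega-t)^2/h}$ paired with $(|D_t|^{n-1}\mathcal{R}u)(\omega,t)$.

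It remains to transfer $|D_t|^{n-1}$ from $\mathcal{R}u$ onto the Gaussian. When $n$ is odd, $|D_t|^{n-1}=(-1)^{(n-1)/2}\partial_t^{n-1}$ is a genuine differential operator; $n-1$ integrations by parts combined with Rodrigues' formula
$$\partial_t^{n-1}e^{-\pi(z\omega-t)^2/h}=\left(\frac{\pi}{h}\right)^{(n-1)/2}H_{n-1}\!\left(\sqrt{\frac{\pi}{h}}(z\omega-t)\right)e^{-\pi(z\omega-t)^2/h}$$
produce exactly the Hermite kernel defining $\mathcal{B}_h$, and collecting the prefactors $2^{n/4}$, $C_n^{-1}$, $(\pi/h)^{(n-1)/2}$, $h^{(n-1)/2}$ together with the sign from integration by parts gives the explicit $A_{2k+1}$. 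When $n$ is even, factor $|D_t|^{n-1}=(-1)^{(n-2)/2}\pi^{-1}\mathcal{H}\partial_t^{n-1}$ via the Fourier-multiplier identity $|D_t|=\pi^{-1}\mathcal{H}\partial_t$ (a consequence of $|D_t|\leftrightarrow|\xi|$ and $\mathcal{H}\leftrightarrow-i\pi\sgn\xi$); since $\mathcal{H}$ commutes with $\partial_t$, rewrite the integrand as $\partial_t^{n-1}(\mathcal{H}\mathcal{R}u)$, apply the same integration by parts, and obtain $\mathcal{T}_hu=\mathcal{B}_h\mathcal{H}\mathcal{R}u$ with the stated $A_{2k}$.

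The main obstacle I anticipate is constant bookkeeping: verifying that the various $2^{n/4}$, $\pi^{(n-1)/2}$, $h$-powers, alternating signs, the extra $\pi^{-1}$ appearing only in the even case via the Hilbert factor, and the $\Gamma$-function identity hidden in $C_n=(4\pi)^{(n-1)/2}\Gamma(n/2)\Gamma(1/2)$ conspire to produce precisely the explicit $A_{2k+1}$ and $A_{2k}$ displayed in the definition of $\mathcal{B}_h$. The contour shift in the $y'$-Gaussian and the intertwining formula on the Schwartz range are standard, and the $\mathscr{E}^\prime$-extension then follows from the continuity noted at the outset.
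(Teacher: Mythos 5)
Your proposal is correct and essentially coincides with the paper's proof. The paper applies the Plancherel formula of the Radon transform to the pair $\bigl(u,\,e^{-\pi(z-\cdot)^2/h}\bigr)$, whereas you substitute the inversion formula $u=C_n^{-1}\mathcal{R}^\ast|D_t|^{n-1}\mathcal{R}u$ and unfold $\mathcal{R}^\ast$ — but this is the same computation, since the Plancherel identity is precisely the inversion formula combined with the duality of $\mathcal{R}$ and $\mathcal{R}^\ast$; thereafter the orthogonal decomposition $z=z'+(z\omega)\omega$, the Gaussian integral over $\omega^\perp$ yielding $h^{(n-1)/2}$, the conversion $|D_t|^{n-1}=(-1)^{(n-1)/2}\partial_t^{n-1}$ (odd $n$) or $\frac{(-1)^{(n-2)/2}}{\pi}\mathcal{H}\partial_t^{n-1}$ (even $n$), integration by parts, and Rodrigues' formula for $H_{n-1}$ all agree step for step with the paper.
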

\par
It is well-known that the Bargmann transform $\mathcal{T}_h$ 
is an elliptic Fourier integral operator acting on 
$\mathscr{S}^\prime(\mathbb{R}^n)$. 
See, e.g., \cite{Martinez} and \cite{HitrikSjoestrand} for this. 
Similarly $\mathcal{B}_h$ is also an elliptic Fourier integral operator.  
\begin{theorem}
\label{theorem:bh} 
The integral transform $\mathcal{B}_h$ 
is an analytic and elliptic Fourier integral operator on 
$\widetilde{\mathbb{P}^n}$ 
in the sense of {\rm \cite[Section~2.6]{HitrikSjoestrand}}. 
More precisely, 
\begin{itemize}
\item 
The imaginary part of the phase function 
$\operatorname{Im}\phi(x-i\xi,\omega,t)$ 
has non-degenerate critical points given by 
$x\omega=t$, $\xi\ne0$ and $\omega=\pm\xi/\lvert\xi\rvert$. 
Moreover $\phi(x-i\xi,\omega,t)$ satisfies 
$\phi^\prime_\zeta \in \mathbb{R}^n$, 
$\operatorname{Im}\phi^{\prime\prime}_{\zeta\zeta}>0$ 
and 
$\det\phi^{\prime\prime}_{z\zeta}\ne0$ 
at these points.  
\item 
We have $a_n(z,\omega,t)\ne0$ at the non-degenerate critical points 
provided that $\lvert\xi\rvert$ is sufficiently large. 
\item 
The imaginary part of the phase function $\operatorname{Im}\phi(z,\omega,t)$ 
has degenerate critical points given by $x\omega=t$ and $\xi\omega=0$. 
\item 
The contribution of $U \in \mathscr{S}^\prime(\widetilde{\mathbb{P}^n})$ near the degenerate critical points to $\mathcal{B}_hU$ is negligible. 
More precisely, for any $U \in \mathscr{S}^\prime(\widetilde{\mathbb{P}^n})$ 
and for any $(x_0,\xi_0) \in T^\ast(\mathbb{R}^n)\setminus0$, 
there exist a small neighborhood $W$ of $(x_0,\xi_0)$ in $T^\ast(\mathbb{R}^n)\setminus0$ 
and a compactly supported smooth function $\chi(\omega,t)$ on $\mathbb{P}^n$ such that 
all the degenerate critical points for $(x,\xi){\in}W$ are not contained in 
$\operatorname{supp}[1-\chi]$ and 
$$
e^{-\pi\xi^2/h}\mathcal{B}_h(\chi{U})(x-i\xi)
=
\mathcal{O}(e^{-\pi\xi_0^2/8h})
$$
as $h\downarrow0$ uniformly for $(x,\xi){\in}W$. 
\end{itemize}
\end{theorem}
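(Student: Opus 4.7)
The plan is to verify the four bullets by direct computation on the explicit phase $\phi(z,\omega,t)=i\pi(z\omega-t)^2$. Writing $w=z\omega-t$ so that $\operatorname{Im}\phi=\pi[(x\omega-t)^2-(\xi\omega)^2]$, I first locate the critical points of $\operatorname{Im}\phi$ on $\mathbb{S}^{n-1}\times\mathbb{R}$ with respect to $(\omega,t)$ by Lagrange multipliers. The $t$-equation yields $x\omega=t$, and the $\omega$-equation $2\pi(x\omega-t)x-2\pi(\xi\omega)\xi=\lambda\omega$ reduces under $x\omega=t$ to $-2\pi(\xi\omega)\xi=\lambda\omega$. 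The two cases $\xi\omega=0$ (forcing $\lambda=0$) and $\xi\omega\ne0$ (forcing $\omega=\pm\xi/\lvert\xi\rvert$ with $\xi\ne0$) produce exactly the degenerate and non-degenerate critical sets of bullets~3 and~1.

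For the non-degeneracy statements in bullet~1, I would rotate so that $\xi=\lvert\xi\rvert e_n$, take local coordinates $\zeta=(\omega_1,\dotsc,\omega_{n-1},t)$ with $\omega_n=\sqrt{1-\lvert\omega'\rvert^2}$, and evaluate at $\omega'=0$, $t=x_n$. A direct second-derivative calculation then gives
$$
\operatorname{Im}\phi''_{\zeta\zeta}
=
2\pi
\begin{pmatrix}
\lvert\xi\rvert^2 I_{n-1}+x'(x')^T & -x' \\
-(x')^T & 1
\end{pmatrix},
$$
whose associated quadratic form $2\pi[\lvert\xi\rvert^2\lvert v'\rvert^2+(x'\cdot v'-v_t)^2]$ is positive definite for $\lvert\xi\rvert>0$. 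A parallel computation shows $\phi''_{z\zeta}$ is block-triangular with determinant $-2i\pi(2\pi\lvert\xi\rvert)^{n-1}\ne0$, while $\phi'_\zeta=0\in\mathbb{R}^n$ is trivial at a critical point. Bullet~2 is then immediate: at such a critical point $z\omega-t=\mp i\lvert\xi\rvert$ is nonzero and purely imaginary, and since all roots of $H_{n-1}$ are real, $a_n$ does not vanish there (in fact for every $\lvert\xi\rvert>0$). For bullet~3, at a degenerate critical point $w=0$, so $\phi''_{\zeta\zeta}=2i\pi(\nabla_\zeta w)(\nabla_\zeta w)^T$ is of rank at most one.

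The real work lies in bullet~4. The strategy is to choose $\chi(\omega,t)$ compactly supported in a symmetric tubular neighborhood of the degenerate manifold $\{\omega\cdot\xi_0=0,\ \omega\cdot x_0=t\}$ which satisfies $\lvert\omega\cdot\xi_0\rvert\leqq\varepsilon\lvert\xi_0\rvert$ for a small fixed $\varepsilon$, and then to shrink $W$ so that $\lvert\xi-\xi_0\rvert\leqq\varepsilon\lvert\xi_0\rvert$ for $(x,\xi)\in W$. On $\operatorname{supp}\chi$ and for $(x,\xi)\in W$ this forces $(\xi\omega)^2\leqq(2\varepsilon\lvert\xi_0\rvert)^2$, and for $\varepsilon$ small enough one obtains the uniform bound $-\operatorname{Im}\phi\leqq\pi\xi^2-\pi\xi_0^2/4$. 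Estimating $\mathcal{B}_h(\chi U)$ as the distributional pairing of the compactly supported $\chi U$ against $e^{i\phi/h}a_n$, and noting that each derivative of $e^{i\phi/h}a_n$ costs only a power of $h^{-1}$, gives $\lvert\mathcal{B}_h(\chi U)(x-i\xi)\rvert\leqq Ch^{-M}e^{\pi\xi^2/h-\pi\xi_0^2/(4h)}$; the polynomial factor $h^{-M}$ is then absorbed into half of the exponential gap, yielding the announced $e^{-\pi\xi_0^2/(8h)}$ after division by $e^{\pi\xi^2/h}$. The principal obstacle is engineering $\chi$ and $W$ so that this exponential gap is truly uniform over $W$ and large enough to swallow $h^{-M}$; the earlier bullets are essentially a Hessian bookkeeping exercise.
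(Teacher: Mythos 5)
Your argument is correct and follows the same four-part outline as the paper's proof, so the comparison is at the level of individual steps. For the critical set you use Lagrange multipliers on $\mathbb{S}^{n-1}$ where the paper uses the graph parametrization $\omega_n=\pm\sqrt{1-(\omega')^2}$; both separate the same two cases. For the Hessian you rotate so that $\xi=\lvert\xi\rvert e_n$ before differentiating, whereas the paper keeps $\omega$ general and congruence-transforms $\operatorname{Im}\phi^{\prime\prime}_{\zeta\zeta}$ by a unipotent matrix to isolate the block $-A>0$; the two routes are comparably long and yield the same matrices up to congruence, and likewise for the block-triangular form of $\phi^{\prime\prime}_{z\zeta}$. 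The genuinely sharper step in your write-up is bullet two: the paper expands $H_{n-1}(-i\sqrt{\pi/h}\lvert\xi\rvert)$ and dominates by the top-degree term, which only gives $a_n\ne0$ for $\lvert\xi\rvert$ large, while your observation that every zero of a Hermite polynomial is real yields nonvanishing for every $\lvert\xi\rvert>0$. Your bullet four uses the same cut-off mechanism as the paper's Step~4 (choose $\chi\equiv1$ on a tube around the degenerate manifold, shrink $W$ so that $(\xi\omega)^2$ is uniformly small on $\operatorname{supp}\chi$, then spend part of the exponential gap to absorb the power $h^{-M}$ that comes from pairing $\chi U$ with a test function depending on $h$); the paper simply realizes the tube concretely via the sets $S_\rho^\perp(\xi_0)\subset C_\rho(\xi_0)$ and a product cut-off $\chi_1(\omega)\chi_2(t)$. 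One small imprecision to repair: at a Case-I point you assert that $\phi^{\prime\prime}_{\zeta\zeta}=2i\pi(\nabla_\zeta w)(\nabla_\zeta w)^T$ has rank at most one. That is true for the \emph{complex} matrix, but the object to be ruled out is $\operatorname{Im}\phi^{\prime\prime}_{\zeta\zeta}=2\pi\bigl(aa^T-bb^T\bigr)$ with $a=\operatorname{Re}\nabla_\zeta w$, $b=\operatorname{Im}\nabla_\zeta w$, and this real symmetric matrix can have rank two. The conclusion still holds: for $\xi\ne0$ the vectors $a$ and $b$ are never parallel (their $t$-components are $-1$ and $0$ respectively), so one finds $v\perp a$ with $v\cdot b\ne0$ and hence ${}^tv\,\operatorname{Im}\phi^{\prime\prime}_{\zeta\zeta}\,v<0$, while for $\xi=0$ the matrix reduces to the rank-one $aa^T$; in either case $\operatorname{Im}\phi^{\prime\prime}_{\zeta\zeta}$ is not positive definite, which is what bullet three requires.
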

We call the operator $\mathcal{B}_h$ a Bargmann transform on $\widetilde{\mathbb{P}^n}$ 
with a phase function $\phi(z,\omega,t)$ and an amplitude $a_n(z,\omega,t)$. 
Following \cite{Sjoestrand1} 
we can introduce an analytic wave front set and a wave front set for 
$\mathscr{S}^\prime(\widetilde{\mathbb{P}^n})$.  
See also 
\cite{Sjoestrand2}, 
\cite{HitrikSjoestrand} 
and 
\cite{Martinez}. 
The cotangent space and bundle of $\mathbb{P}^n$ are given by 
$$
T^\ast_{(\omega,t)}(\mathbb{P}^n)
=
\{(\eta,\tau)\in\mathbb{R}^n\times\mathbb{R}\ : \ \eta\in\omega^\perp\},
$$
$$
T^\ast(\mathbb{P}^n)
=
\{(\omega,t,\eta,\tau)\ : \ (\omega,t)\in\mathbb{P}^n, \eta\in\omega^\perp, \tau\in\mathbb{R}\}
$$
respectively. 
Our Bargmann transform $\mathcal{B}_h$ is a Fourier integral operator associated with 
the canonical transform $\kappa_B$ defined by 
$$
\kappa_B : 
T^\ast(\mathbb{P}^n)
\ni
\bigl(\omega,t,-\phi^\prime_\zeta(z,\omega,t)\bigr) 
\mapsto 
\bigl(z,\phi^\prime_z(z,\omega,t)\bigr)\in\mathbb{C}^n\times\mathbb{C}^n. 
$$
We have 
\begin{align}
  \phi(x-i\xi,\omega,t)
& =
  i\pi
  \{(x\omega-t)-i\xi\omega\}^2
\\
& =
  i\pi
  \{(x\omega-t)^2-(\xi\omega)^2\}
  +
  2\pi
  (x\omega-t)(\xi\omega), 
\nonumber
\\
  \operatorname{Im}\phi(x-i\xi,\omega,t)
& =
  \pi(x\omega-t)^2-\pi(\xi\omega)^2. 
\label{equation:imphi} 
\end{align}
If we set 
$$
\Phi(z)
:=
\sup_{(\omega,t)\in\mathbb{P}^n}
\bigl\{-\operatorname{Im}\phi(z,\omega,t)\bigr\},
$$
then we deduce that for $\xi\ne0$ 
\begin{align*} 
  -\operatorname{Im}\phi(x-i\xi,\omega,t)
& =
  \pi
  \bigl\{(\xi\omega)^2-(x\omega-t)^2\bigr\}
  \leqq
  \pi(\xi\omega)^2
\\
& \leqq
  \pi\xi^2
  =
  -\operatorname{Im}\phi(z,\xi/\lvert\xi\rvert,x\xi/\lvert\xi\rvert),
\end{align*}
and for $\xi=0$ 
$$
-\operatorname{Im}\phi(x,\omega,t)
=
-
\pi
(x\omega-t)^2
\leqq
0
=
-\operatorname{Im}\phi(x,\omega,x\omega). 
$$
Then we obtain 
$$
\Phi(x-i\xi)
=
\pi\xi^2, 
\quad
\text{i.e.,}
\quad
\Phi(z)
=
\pi(\operatorname{Im}z)^2
=
-
\frac{\pi(z-\bar{z})^2}{4}
$$
for any $\xi\in\mathbb{R}^n$. 
We can characterize the image of $\kappa_B$ using $\Phi(z)$. 
Set 
$$
\Lambda_\Phi
:=
\bigl\{
\bigl(z,-2i\Phi^\prime_z(z)\bigl) 
\ : \ 
z\in\mathbb{C}^n
\bigr\}
=
\{(x-i\xi,2\pi\xi)\ : \ (x,\xi)\in\mathbb{R}^n\times\mathbb{R}^n\}.
$$ 
It is known that 
$\kappa_B\bigl(T^\ast(\mathbb{P}^n)\bigr)=\Lambda_\Phi$ 
holds. 
See 
\cite{Sjoestrand1}, 
\cite{Sjoestrand2} 
and 
\cite{HitrikSjoestrand}. 
In our case we remark that 
$$
\left(
\frac{\xi}{\lvert\xi\rvert},
\frac{x\xi}{\lvert\xi\rvert}
\right)
=
-
\left(
\frac{\xi}{\lvert\xi\rvert},
\frac{x\xi}{\lvert\xi\rvert}
\right)
\quad
\text{in}
\quad
\mathbb{P}^n.
$$
for any $(x,\xi) \in T^\ast(\mathbb{R}^n)\setminus0$. 
We have more about the canonical transform as follows. 
\begin{theorem}
\label{theorem:kb}
We have  
$$
\kappa_B(\omega,t,2\pi\tau\eta,2\pi\tau)
=
(t\omega-\eta-i\tau\omega,2\pi\tau\omega) 
$$
for 
$(\omega,t,2\pi\tau\eta,2\pi\tau) \in T^\ast(\mathbb{P}^n)$, 
and 
$$
\kappa_B^{-1}(x-i\xi,2\pi\xi)
=
\pm
\left(
\frac{\xi}{\lvert\xi\rvert},
\frac{x\xi}{\lvert\xi\rvert}, 
2\pi\lvert\xi\rvert
\left(-x+\frac{(x\xi)\xi}{\lvert\xi\rvert^2}\right),
2\pi\lvert\xi\rvert
\right)
$$ 
for 
$(x,\xi) \in T^\ast(\mathbb{R}^n)\setminus0$.  
\end{theorem}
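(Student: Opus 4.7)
The strategy is direct substitution into the explicit phase $\phi(z,\omega,t)=i\pi(z\omega-t)^2$. First I would record the three relevant derivatives:
$$
\phi^\prime_z = 2i\pi(z\omega-t)\omega,\qquad \phi^\prime_t = -2i\pi(z\omega-t),
$$
and, for $v\in T_\omega\mathbb{S}^{n-1}=\omega^\perp$,
$$
\langle d_\omega\phi,v\rangle = 2i\pi(z\omega-t)\cdot(zv).
$$
Under the identification $T^\ast_{(\omega,t)}\mathbb{P}^n\cong\omega^\perp\times\mathbb{R}$ used in the statement, the restriction of $d_\omega\phi$ to $\omega^\perp$ is exactly what the coordinate expression $\phi^\prime_\zeta$ encodes intrinsically.

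To verify the first formula I would substitute the ansatz $z=t\omega-\eta-i\tau\omega$ with $\eta\in\omega^\perp$. Using $\omega^2=1$ and $\eta\omega=0$, the factor $z\omega-t$ collapses to $-i\tau$. A short calculation then gives $\phi^\prime_z=2\pi\tau\omega$ (the second component of the claimed image) and $-\phi^\prime_t=2\pi\tau$. For the $\omega$-part, $zv=-\eta v$ for $v\in\omega^\perp$, so $-\langle d_\omega\phi,v\rangle = 2\pi\tau(\eta v)$, i.e., the intrinsic covector equals $2\pi\tau\eta$. This yields $\kappa_B(\omega,t,2\pi\tau\eta,2\pi\tau) = (t\omega-\eta-i\tau\omega,2\pi\tau\omega)$.

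For the inverse formula, given $(x-i\xi,2\pi\xi)$ with $\xi\neq0$, I would solve the system $2\pi\tau\omega=2\pi\xi$ and $t\omega-\eta-i\tau\omega=x-i\xi$ subject to $\omega^2=1$ and $\eta\omega=0$. The first equation forces $\tau\omega=\xi$, whence $(\omega,\tau)=\pm(\xi/\lvert\xi\rvert,\lvert\xi\rvert)$; the two signs reflect the double cover $\mathbb{S}^{n-1}\times\mathbb{R}\to\mathbb{P}^n$ and are identified in $T^\ast\mathbb{P}^n$. The imaginary part of the second equation is then automatic, while the real part combined with $\eta\omega=0$ yields $t=x\omega$ and $\eta=(x\omega)\omega-x$. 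Substituting $\omega=\xi/\lvert\xi\rvert$ reproduces the claimed $\eta=(x\xi)\xi/\lvert\xi\rvert^2-x$, and multiplying by $2\pi\tau=2\pi\lvert\xi\rvert$ gives the stated cotangent component.

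The only delicate point I anticipate is the bookkeeping between the local-coordinate covector $-\phi^\prime_\zeta$ and the intrinsic representative $(\eta,\tau)\in\omega^\perp\times\mathbb{R}$; once $d_\omega\phi$ is restricted to the sphere tangent space, both assertions reduce to straightforward algebra driven by the two scalar identities $\omega^2=1$ and $\eta\omega=0$.
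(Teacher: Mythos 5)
Your proposal is correct, and it takes a genuinely different, cleaner route from the paper's proof. The paper first computes $\kappa_B^{-1}$ by introducing explicit local coordinates $\mu(s^\prime) = s_1\nu_1+\dotsb+s_{n-1}\nu_{n-1}+\sqrt{1-(s^\prime)^2}\,\omega$ near a fixed $\omega$ with orthonormal basis $\{\nu_1,\dotsc,\nu_{n-1},\omega\}$, redoes the critical-point analysis of Theorem~\ref{theorem:bh} in these coordinates, computes the coordinate expression of $-\phi^\prime_\zeta$ at the critical points, and then recognizes $-\sum_j(x\nu_j)\nu_j = -x+(x\omega)\omega$ as the coordinate-free covector; the forward formula $\kappa_B$ is then obtained at the end by back-solving the resulting system. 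You instead work intrinsically from the outset: you write down $\phi^\prime_z$, $\phi^\prime_t$ and the directional derivative $\langle d_\omega\phi,v\rangle = 2i\pi(z\omega-t)(zv)$ for $v\in\omega^\perp = T_\omega\mathbb{S}^{n-1}$, then verify the forward formula directly by substituting $z = t\omega-\eta-i\tau\omega$ (everything collapses under $\omega^2=1$, $\eta\omega=0$), and finally derive $\kappa_B^{-1}$ by solving the resulting linear system. This reverses the logical order of the paper and avoids the coordinate gymnastics and the repeated critical-point computation; it also makes the identification $T^\ast_{(\omega,t)}\mathbb{P}^n\cong\omega^\perp\times\mathbb{R}$ transparent rather than leaving it to an \emph{a posteriori} recognition step. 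One small remark: your direct substitution establishes that $z = t\omega-\eta-i\tau\omega$ is a solution of $-\phi^\prime_\zeta(z,\omega,t)=(2\pi\tau\eta,2\pi\tau)$, and your own two equations $z\omega = t-i\tau$ and $zv = -\eta v$ (for all $v\in\omega^\perp$) in fact show it is the unique one when $\tau\ne0$, so well-definedness of $\kappa_B$ is implicitly covered; it would be worth stating this explicitly, since the paper leans on the non-degeneracy from Theorem~\ref{theorem:bh} for the same purpose.
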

Suppose that all the distributions in the present paper 
are independent of the semiclassical parameter $h>0$. 
Applying the theories developed in 
\cite{Sjoestrand1},  
\cite{Sjoestrand2}, 
\cite{HitrikSjoestrand} 
and 
\cite{Martinez} 
to Theorem~\ref{theorem:bh} and Theorem~\ref{theorem:kb}, 
we obtain the characterization of 
the wave front set and the analytic wave front set 
of distributions on $\widetilde{\mathbb{P}^n}$.  
\begin{proposition}
\label{theorem:wf} 
Let $(x_0,\xi_0) \in T^\ast(\mathbb{R}^n)\setminus0$, 
and let $U \in \mathscr{S}^\prime(\widetilde{\mathbb{P}^n})$. 
\begin{itemize}
\item 
Let $\operatorname{WF}(U)$ be the wave front set of $U$. 
We have 
$$
\pm
\left(
\frac{\xi_0}{\lvert\xi_0\rvert},
\frac{x_0\xi_0}{\lvert\xi_0\rvert}, 
2\pi\lvert\xi_0\rvert
\left(-x_0+\frac{(x_0\xi_0)\xi_0}{\lvert\xi_0\rvert^2}\right),
2\pi\lvert\xi_0\rvert
\right)
\not\in
\operatorname{WF}(U)
$$
if and only if 
\begin{equation}
e^{-\pi(\operatorname{Im}z)^2/h}
\mathcal{B}_hU(z)
=
\mathcal{O}(h^\infty)
\label{equation:wf1} 
\end{equation}
as $h\downarrow0$ uniformly near $x_0-i\xi_0$, 
that is, there exists a neighborhood $V_0$ at $x_0-i\xi_0$ in $\mathbb{C}^n$ 
such that for any $N=1,2,3,\dotsc$ there exists $C_N>0$ such that 
$$
e^{-\pi(\operatorname{Im}z)^2/h}
\lvert{\mathcal{B}_hU(z)}\vert
\leqq
C_Nh^N
\quad
\text{for}
\quad
z \in V_0. 
$$
\item 
Let $\operatorname{WF}_\text{{\rm A}}(U)$ 
be the analytic wave front set of $U$. 
We have 
$$
\pm
\left(
\frac{\xi_0}{\lvert\xi_0\rvert},
\frac{x_0\xi_0}{\lvert\xi_0\rvert}, 
2\pi\lvert\xi_0\rvert
\left(-x_0+\frac{(x_0\xi_0)\xi_0}{\lvert\xi_0\rvert^2}\right),
2\pi\lvert\xi_0\rvert
\right)
\not\in
\operatorname{WF}_\text{{\rm A}}(U)
$$
if and only if there exists $\varepsilon>0$ such that 
$$
e^{-\pi(\operatorname{Im}z)^2/h}
\mathcal{B}_hU(z)
=
\mathcal{O}(e^{-\varepsilon/h})
$$
as $h\downarrow0$ uniformly near $x_0-i\xi_0$, 
that is, 
there exists a neighborhood $V_0$ at $x_0-i\xi_0$ in $\mathbb{C}^n$ 
and $C_0>0$ such that 
$$
e^{-\pi(\operatorname{Im}z)^2/h}
\lvert{\mathcal{B}_hU(z)}\vert
\leqq
C_0e^{-\varepsilon/h}
\quad
\text{for}
\quad
z \in V_0. 
$$
\end{itemize}
\end{proposition}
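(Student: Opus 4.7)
The plan is to apply the general theory of analytic Fourier integral operators with complex phase developed by Sj\"ostrand and extended by Hitrik--Sj\"ostrand; the proposition is an instance of the standard FBI/Bargmann characterization of (analytic) wave front sets for elliptic analytic FIOs, once the contribution of the degenerate critical points of $\operatorname{Im}\phi$ is shown to be negligible. The structural input is provided by Theorems~\ref{theorem:bh} and~\ref{theorem:kb}.

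First I would use the cutoff supplied by the fourth bullet of Theorem~\ref{theorem:bh} to write $U=\chi U+(1-\chi)U$, where $\chi(\omega,t)$ is compactly supported and equals $1$ on a neighborhood of every degenerate critical point occurring for $(x,\xi){\in}W$. Since the non-degenerate critical point associated with $(x_0,\xi_0)$ sits at $\omega=\pm\xi_0/\lvert\xi_0\rvert$, $t=x_0\xi_0/\lvert\xi_0\rvert$, where $\xi\omega=\pm\lvert\xi\rvert\ne 0$, it is separated from the degenerate locus $\{\xi\omega=0\}$, so $\chi$ may additionally be required to vanish in a small neighborhood of it. Theorem~\ref{theorem:bh} then yields $e^{-\pi\xi^2/h}\mathcal{B}_h(\chi{U})(x-i\xi)=\mathcal{O}(e^{-\pi\xi_0^2/8h})$ uniformly on $W$, which is dominated both by $\mathcal{O}(h^\infty)$ and, for any $0<\varepsilon\leqq\pi\xi_0^2/8$, by $\mathcal{O}(e^{-\varepsilon/h})$; the $\chi$-part therefore neither creates nor destroys either kind of wave front set contribution at the relevant point.

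For $\mathcal{B}_h((1-\chi)U)$ only non-degenerate critical points of $\phi$ enter, and the amplitude $a_n$ is elliptic there by the second bullet of Theorem~\ref{theorem:bh}. Thus the operator acting on $(1-\chi)U$ falls within the framework of elliptic analytic FIOs with complex phase treated in \cite{Sjoestrand1, Sjoestrand2, HitrikSjoestrand, Martinez}, for which a point $\rho_0$ fails to lie in $\operatorname{WF}(V)$ (respectively $\operatorname{WF}_\text{{\rm A}}(V)$) precisely when $e^{-\Phi(z)/h}\mathcal{B}_hV(z)=\mathcal{O}(h^\infty)$ (respectively $\mathcal{O}(e^{-\varepsilon/h})$ for some $\varepsilon>0$) uniformly in a complex neighborhood of $\kappa_B(\rho_0)$. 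Taking $V=(1-\chi)U$ and $\rho_0=\kappa_B^{-1}(x_0-i\xi_0,2\pi\xi_0)$, explicitly computed in Theorem~\ref{theorem:kb}, together with $\Phi(z)=\pi(\operatorname{Im}z)^2$, produces the two equivalences claimed.

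To conclude I assemble the pieces: since $1-\chi\equiv 1$ near the base point $\pm(\xi_0/\lvert\xi_0\rvert,x_0\xi_0/\lvert\xi_0\rvert)$ in $\widetilde{\mathbb{P}^n}$, the (analytic) wave front sets of $U$ and $(1-\chi)U$ coincide at $\rho_0$, and the two signs appearing in Theorem~\ref{theorem:kb} represent the same point in $\widetilde{\mathbb{P}^n}$, matching the $\pm$ in the statement. The one technical point to monitor is that the excision of the degenerate critical points does not spoil the sharp decay thresholds in the definitions of $\operatorname{WF}$ and $\operatorname{WF}_\text{{\rm A}}$; for the $C^\infty$ case this is immediate, and for the analytic case one simply arranges that the $\varepsilon$ asserted in the conclusion is no larger than the pre-existing threshold $\pi\xi_0^2/8$ furnished by the excision step. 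Since this observation is already built into the fourth bullet of Theorem~\ref{theorem:bh}, no additional work is required.
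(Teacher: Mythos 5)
Your proposal is correct and follows essentially the same route as the paper, which does not write out a separate proof at all: immediately before the proposition the author simply states that the characterization follows by applying the theories of Sj\"ostrand, Hitrik--Sj\"ostrand, and Martinez to Theorem~\ref{theorem:bh} and Theorem~\ref{theorem:kb}. Your write-up makes that deduction explicit---excise the degenerate critical locus by the cutoff $\chi$ from the fourth bullet of Theorem~\ref{theorem:bh} (note that, by construction, $\chi_1$ already vanishes near $\omega=\pm\xi_0/\lvert\xi_0\rvert$ since $\lvert\nu\omega\rvert<1/2$ fails there, so no separate modification is needed), observe that $\mathcal{B}_h(\chi U)$ is $\mathcal{O}(e^{-\pi\xi_0^2/8h})$ and hence harmless for both the $\mathcal{O}(h^\infty)$ and $\mathcal{O}(e^{-\varepsilon/h})$ thresholds, and then apply the standard FBI/Bargmann characterization of $\operatorname{WF}$ and $\operatorname{WF}_{\text{A}}$ to the elliptic, non-degenerate part at $\rho_0=\kappa_B^{-1}(x_0-i\xi_0,2\pi\xi_0)$---and your handling of the $\pm$ sign as a single point of $T^\ast(\widetilde{\mathbb{P}^n})$ under the covering involution is also correct.
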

The integral transform $\mathcal{T}_h$ is an elliptic Fourier integral operator 
associated with a canonical transform 
$$
K_\mathcal{T}(x,2\pi\xi)
=
\bigl(x-i\xi,-2i\Phi^\prime_z(x-i\xi)\bigr),
\quad
K_\mathcal{T}\bigl(T^\ast(\mathbb{R}^n)\bigr)
=
\Lambda_\Phi,  
$$
and the same results as Proposition~\ref{theorem:wf} hold 
for the wave front set and the analytic wave front set of 
$u \in \mathscr{S}^\prime(\mathbb{R}^n)$. 
\par
In terms of $T^\ast(\mathbb{P}^n)$, 
we have  
$$
(\omega_0,t_0,2\pi\tau_0\eta_0,2\pi\tau_0) 
\not\in\operatorname{WF}(U)
$$
if and only if 
\eqref{equation:wf1} holds uniformly near 
$t_0\omega_0-\eta_0-i\tau_0\omega_0$ in $\mathbb{C}^n$. 
The case of $\operatorname{WF}_\text{A}(U)$ is also stated in the same way. 
Proposition~\ref{theorem:wf} simply states some facts on $\operatorname{WF}(U)$ 
since the definition of the wave front set is not concerned with $\mathcal{B}_h$. 
However, the statement on $\operatorname{WF}_\text{A}(U)$ 
should be interpreted as the comprehensive definition 
of the analytic wave front set of $U$. 
It is well-known that 
the definition of analytic wave front sets has a long history. 
See, e.g., \cite[Section~2.1]{HitrikSjoestrand} for this.  
See also \cite[Chapter VIII]{Hoermander} 
for the standard definition of wave front sets, and 
\cite[Chapter IX]{Hoermander} 
for another definition of analytic wave front sets. 
\par
The Hilbert transform $\mathcal{H}$ does not affect the wave front sets. 
Indeed, $\mathcal{H}$ is a Fourier multiplier with a symbol $-i\pi\operatorname{sgn}(\tau)$, 
that is, $-i\pi$ for $\tau>0$ and $i\pi$ for $\tau<0$. 
See \eqref{equation:hilbert}. 
Let $p(2\pi\tau)$ be a smooth function such that 
$p(2\pi\tau)=\mp{i}\pi$ for $\pm\tau\geqq1$. 
Then, $p(D_t)$ is an elliptic pseudodifferential operator of order zero, 
and $p(D_t)U-\mathcal{H}U$ is analytic in $t$. 
Hence we deduce that the (analytic) wave front set of $U$ is equal to that of $p(D_t)U$ 
and also equal to that of $\mathcal{H}U$. 
So we have automatically the following. 
\begin{theorem}
\label{theorem:wfr} 
Let $(x_0,\xi_0) \in T^\ast(\mathbb{R}^n)\setminus0$, 
and let $u \in \mathscr{E}^\prime(\mathbb{R}^n)$. 
\begin{itemize}
\item 
The point $(x_0,\xi_0)$ belongs to $\operatorname{WF}(u)$ if and only if 
$$
\pm
\left(
\frac{\xi_0}{\lvert\xi_0\rvert},
\frac{x_0\xi_0}{\lvert\xi_0\rvert}, 
2\pi\lvert\xi_0\rvert
\left(-x_0+\frac{(x_0\xi_0)\xi_0}{\lvert\xi_0\rvert^2}\right),
2\pi\lvert\xi_0\rvert
\right)
\in
\operatorname{WF}(\mathcal{R}u). 
$$
\item 
The point $(x_0,\xi_0)$ belongs to $\operatorname{WF}_\text{{\rm A}}(u)$ if and only if  
$$
\pm
\left(
\frac{\xi_0}{\lvert\xi_0\rvert},
\frac{x_0\xi_0}{\lvert\xi_0\rvert}, 
2\pi\lvert\xi_0\rvert
\left(-x_0+\frac{(x_0\xi_0)\xi_0}{\lvert\xi_0\rvert^2}\right),
2\pi\lvert\xi_0\rvert
\right)
\in
\operatorname{WF}_\text{{\rm A}}(\mathcal{R}u).
$$
\end{itemize}
\end{theorem}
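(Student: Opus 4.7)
The plan is to chain together three ingredients already established: the Bargmann characterization of wave front sets on $\mathbb{R}^n$ via $\mathcal{T}_h$, Theorem~\ref{theorem:br}, and Proposition~\ref{theorem:wf}. The paragraph immediately preceding the statement disposes of the Hilbert transform factor that appears in the even-$n$ case, so the proof is ultimately just an assembly step.

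First I would record the counterpart of Proposition~\ref{theorem:wf} for $u\in\mathscr{E}^\prime(\mathbb{R}^n)$ and $\mathcal{T}_h$: since $\mathcal{T}_h$ is an elliptic analytic Fourier integral operator associated with the canonical transform $K_\mathcal{T}$ with $K_\mathcal{T}(T^\ast(\mathbb{R}^n))=\Lambda_\Phi$, the standard theory of \cite{Sjoestrand1, Sjoestrand2, HitrikSjoestrand, Martinez} yields that $(x_0,\xi_0)\notin\operatorname{WF}(u)$ (respectively $\operatorname{WF}_\text{A}(u)$) if and only if $e^{-\pi\xi^2/h}\mathcal{T}_h u(x-i\xi)=\mathcal{O}(h^\infty)$ (respectively $\mathcal{O}(e^{-\varepsilon/h})$) uniformly near $x_0-i\xi_0$. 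This analogue is explicitly invoked in the main text just after Proposition~\ref{theorem:wf}.

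For odd $n$, Theorem~\ref{theorem:br} gives $\mathcal{T}_h u=\mathcal{B}_h\mathcal{R}u$, so the decay of $e^{-\pi\xi^2/h}\mathcal{T}_h u(x-i\xi)$ near $x_0-i\xi_0$ coincides with the decay of $e^{-\pi(\operatorname{Im}z)^2/h}\mathcal{B}_h\mathcal{R}u(z)$ there. Applying Proposition~\ref{theorem:wf} to $U=\mathcal{R}u\in\mathscr{S}^\prime(\widetilde{\mathbb{P}^n})$ translates the decay condition into the statement that the point $\pm(\xi_0/\lvert\xi_0\rvert,\,x_0\xi_0/\lvert\xi_0\rvert,\,2\pi\lvert\xi_0\rvert(-x_0+(x_0\xi_0)\xi_0/\lvert\xi_0\rvert^2),\,2\pi\lvert\xi_0\rvert)$ lies outside $\operatorname{WF}(\mathcal{R}u)$, and similarly for $\operatorname{WF}_\text{A}(\mathcal{R}u)$. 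This is exactly the claim in the odd case.

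For even $n$, the same chain produces the analogous characterization with $\mathcal{H}\mathcal{R}u$ in place of $\mathcal{R}u$, and the remaining step is to remove the $\mathcal{H}$. Here I would invoke the remark recorded immediately above the theorem: for a smooth $p(2\pi\tau)$ with $p(2\pi\tau)=\mp i\pi$ on $\pm\tau\geqq 1$, the operator $p(D_t)$ is an elliptic analytic pseudodifferential operator of order zero in $t$, and $\mathcal{H}U-p(D_t)U$ is real analytic in $t$. Consequently $\mathcal{H}$ preserves both $\operatorname{WF}$ and $\operatorname{WF}_\text{A}$, so applying this with $U=\mathcal{R}u$ reduces the even case to the form already derived in the odd step. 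There is no real obstacle in this argument: the substantive analytic content has been absorbed into Theorem~\ref{theorem:bh} and Proposition~\ref{theorem:wf}, and what remains is bookkeeping. The one point deserving care is checking that the symbol $p$ can be chosen globally smooth and that the corrector $\mathcal{H}-p(D_t)$ is genuinely analytic-regularizing in $t$, so that the preservation of $\operatorname{WF}_\text{A}$ is legitimate in the analytic category, not merely the $C^\infty$ one.
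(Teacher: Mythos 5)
Your proposal is correct and takes essentially the same route as the paper. The paper gives no separate proof of this theorem at all — it writes ``So we have automatically the following'' immediately after the Hilbert-transform remark — and your argument is just the explicit version of that chain: the $\mathcal{T}_h$-characterization of $\operatorname{WF}(u)$ and $\operatorname{WF}_\mathrm{A}(u)$ on $\mathbb{R}^n$, Theorem~\ref{theorem:br} to rewrite $\mathcal{T}_hu$ as $\mathcal{B}_h\mathcal{R}u$ (or $\mathcal{B}_h\mathcal{H}\mathcal{R}u$), Proposition~\ref{theorem:wf} applied to $\mathcal{R}u$ (respectively $\mathcal{H}\mathcal{R}u$), and finally the observation that $\mathcal{H}$ preserves both wave front sets. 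Your closing caveat about the analytic-regularizing nature of $\mathcal{H}-p(D_t)$ is handled in the paper by noting that the corrector symbol is compactly supported in $\tau$, and the points at issue always have $\tau_0=2\pi\lvert\xi_0\rvert\ne0$, where $p(2\pi\tau)$ is constant and hence analytic.
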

It is well-known that the Radon transform is 
an elliptic Fourier integral operators. 
See the celebrated book \cite{GuilleminSternberg} 
of Guillemin and Sternberg for this. 
Quinto, and Krishnan and Quinto gave comprehensive explanation 
for the case of $n=2$ in \cite{Quinto1} and \cite{Quinto2},
and \cite{KrishnanQuinto} respectively. 
Theorem~\ref{theorem:wfr} also give elementary proof of these facts. 
\par
Note that 
$\mathcal{B}_h=\mathcal{T}_h\circ\mathcal{R}^\ast\circ(\partial/\partial t)^{n-1}$,  
and both the Bargmann transform $\mathcal{T}_h$ 
and the Radon transform $\mathcal{R}$ are elliptic Fourier integral operators. 
So Theorem~\ref{theorem:bh} is natural and not surprising. 
The author tried to obtain results not only for the Radon transform 
but also for the $d$-plane transform which is a mapping of functions to 
their integrations on all the $d$-dimensional affine subspaces. 
See \cite{Helgason} for this. 
Unfortunately, he could not obtain the results for $d{\ne}n-1$. 
He believes that Theorem~\ref{theorem:bh} is meaningful 
as microlocal analysis on the space of hyperplanes, 
which is the set of the variables of measurements. 
Recently microlocal analysis plays a crucial role in imaging science. 
In fact, for example, 
microlocal artifacts and limited data problems for CT scanners 
are very important subjects in geometric tomography, 
and studied based on microlocal analysis. 
See \cite{KrishnanQuinto} and \cite{PalaciosUhlmannWang} for instance. 
He expects $\mathcal{B}_h$ to play an important role 
in pure and applied mathematics related to geometric tomography, 
because $\mathcal{B}_h$ maps the microlocal singularities of the measurements 
to the microlocal singularities of the objects to be examined, 
based on the canonical relation. 
\par
All we have to do is to prove 
Theorems~\ref{theorem:br}, \ref{theorem:bh} and \ref{theorem:kb}. 
The plan of the present paper is as follows. 
In Section~\ref{section:origin} 
we recall some basic facts on $\mathcal{T}_h$ 
and prove Theorem~\ref{theorem:br}. 
In Section~\ref{section:FIO} we prove Theorems~\ref{theorem:bh}. 
Finally in Section~\ref{section:WF} 
we find the canonical transform $K_{\mathcal{B}}$ 
of the form which is independent of the local coordinates of $\mathbb{P}^n$ 
to prove Theorem~\ref{theorem:kb}.    
\section{The origin of our integral transform}
\label{section:origin} 
In the present section we begin by recalling some basic facts 
on the $h$-Fourier transform and $\mathcal{T}_h$, 
and prove Theorem~\ref{theorem:br}. 
See, e.g., \cite{Martinez} for $h$-Fourier transform. 
The $h$-Fourier transform of $u \in \mathscr{S}(\mathbb{R}^n)$ is defined by 
$$
\mathcal{F}_hu(\eta)
:=
h^{-n/2}
\int_{\mathbb{R}^n}
e^{-2\pi i y\eta/h}
u(y)
dy,
\quad
\eta\in\mathbb{R}^n, 
$$
and the inverse $h$-Fourier transform of $u$ is defined by 
$\mathcal{F}_h^\ast u(y):=\mathcal{F}_hu(-y)$. 
It is well-known that 
$\mathcal{F}_h[e^{-\pi y^2/h}](\eta)=e^{-\pi\eta^2/h}$ 
for instance. 
The integral transforms $\mathcal{F}_h$ and $\mathcal{F}_h^\ast$ 
are the inverse operators of each other, that is, 
$\mathcal{F}_h^\ast\mathcal{F}_hu=\mathcal{F}_h\mathcal{F}_h^\ast u=u$ 
holds for any $u \in \mathscr{S}(\mathbb{R}^n)$. 
We introduce the Heisenberg inequality and the coherent states 
to explain what $\mathcal{T}_h$ detects. 
The $L^2$-inner product and $L^2$-norm on $\mathbb{R}^n$ are defined by 
$$
(u,v)_{L^2(\mathbb{R}^2)}
:=
\int_{\mathbb{R}^n}
u(y) \overline{v(y)}
dy,
\quad
\lVert{u}\rVert_{L^2(\mathbb{R}^n)}
:=
\sqrt{(u,u)_{L^2(\mathbb{R}^n)}} 
$$
for $u,v \in \mathscr{S}(\mathbb{R}^n)$. 
The semiclassical Heisenberg inequality in our setting of $h$-Fourier transform is as follows. 
\begin{proposition}
\label{theorem:heisenberg}
Suppose that $u \in \mathscr{S}(\mathbb{R}^n)$ 
and $\lVert{u}\rVert_{L^2(\mathbb{R}^n)}=1$. 
Set 
$$
x
:=
\int_{\mathbb{R}^n}
y\lvert{u(y)}\rvert^2
dy,
\quad
\xi
:=
\int_{\mathbb{R}^n}
\eta\lvert{\mathcal{F}_hu(\eta)}\rvert^2
d\eta
=
\int_{\mathbb{R}^n}
\frac{h}{2\pi i}
\frac{\partial u}{\partial y}(y)
\overline{u(y)}
dy,
$$
\begin{align*}
  \sigma(u,X)^2
& :=
  \int_{\mathbb{R}^n}
  \lvert{2\pi(y-X)u(y)}\rvert^2
  dy,
\\
  \sigma(\hat{u},\Xi)^2
& :=
  \int_{\mathbb{R}^n}
  \lvert{2\pi(\eta-\Xi)\mathcal{F}_hu(\eta)}\rvert^2
  d\eta
  =
  \int_{\mathbb{R}^n}
  \lvert{(hD_y-2\pi\Xi)u(y)}\rvert^2
  dy, 
\end{align*}
where 
$X,\Xi\in\mathbb{R}^n$ and $D_y=-i\partial/\partial y$. 
Then we have 
\begin{equation}
\sigma(u,x)
\cdot
\sigma(\hat{u},\xi) 
\geqq
n\pi h, 
\quad
\sigma(u,X)
\geqq
\sigma(u,x),
\quad
\sigma(\hat{u},\Xi)
\geqq
\sigma(\hat{u},\xi)
\label{equation:heisenberg} 
\end{equation}
for any $X,\Xi\in\mathbb{R}^n$.
\end{proposition}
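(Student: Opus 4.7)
The plan is to treat the three inequalities in order of increasing difficulty: the second and third are ``minimum-variance at the mean'' statements, while only the first carries the genuine content of the uncertainty principle. For $\sigma(u,X)\geq\sigma(u,x)$, I would expand
$$
|2\pi(y-X)|^2
=
4\pi^2|y-x|^2
+
8\pi^2(y-x)\cdot(x-X)
+
4\pi^2|x-X|^2
$$
and integrate against $|u(y)|^2\,dy$; the cross term vanishes by the defining formula for $x$ together with $\|u\|_{L^2(\mathbb{R}^n)}=1$, while the last term contributes a nonnegative $4\pi^2|x-X|^2$, giving $\sigma(u,X)^2=\sigma(u,x)^2+4\pi^2|x-X|^2\geq\sigma(u,x)^2$. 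The third inequality is identical once transported through $\mathcal{F}_h$, using the intertwining $\mathcal{F}_h[(hD_y-2\pi\Xi)u](\eta)=2\pi(\eta-\Xi)\mathcal{F}_hu(\eta)$ and the Plancherel identity $\|\mathcal{F}_hu\|_{L^2}=\|u\|_{L^2}$; the same Plancherel argument simultaneously reconciles the two defining expressions for $\xi$.

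For the main inequality I would introduce the self-adjoint operators
$$
A_j:=2\pi(y_j-x_j),\qquad B_j:=hD_{y_j}-2\pi\xi_j,\qquad j=1,\dotsc,n,
$$
so that $\sigma(u,x)^2=\sum_{j=1}^n\|A_ju\|_{L^2}^2$ and $\sigma(\hat u,\xi)^2=\sum_{j=1}^n\|B_ju\|_{L^2}^2$. A short integration by parts, valid because $u\in\mathscr{S}(\mathbb{R}^n)$, yields the canonical commutation relation $[A_j,B_k]=2\pi ih\,\delta_{jk}$. Pairing against $u$ and exploiting self-adjointness,
$$
2\pi h
=
-i\langle[A_j,B_j]u,u\rangle_{L^2}
=
2\,\operatorname{Im}\langle B_ju,A_ju\rangle_{L^2},
$$
so the Cauchy--Schwarz inequality on the $L^2$ inner product delivers $\|A_ju\|_{L^2}\|B_ju\|_{L^2}\geq\pi h$ for every $j$.

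A second Cauchy--Schwarz inequality, now applied to the sum over $j$, then gives
$$
\sigma(u,x)\,\sigma(\hat u,\xi)
=
\Bigl(\sum_{j=1}^n\|A_ju\|^2\Bigr)^{\!1/2}\Bigl(\sum_{j=1}^n\|B_ju\|^2\Bigr)^{\!1/2}
\geq
\sum_{j=1}^n\|A_ju\|\,\|B_ju\|
\geq
n\pi h,
$$
which is the desired bound. There is no genuine obstacle in the proof: every quantity in sight lives in $\mathscr{S}(\mathbb{R}^n)$, so the integrations by parts and the Plancherel identities need no justification beyond invoking $u\in\mathscr{S}(\mathbb{R}^n)$. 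The only real bookkeeping point is tracking the factor $2\pi$ inherited from the $h$-Fourier convention, which is what converts the customary lower bound $nh/2$ into $n\pi h$.
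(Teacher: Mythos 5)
Your argument is correct, and it supplies precisely the details the paper leaves out: the paper's ``proof'' of this proposition consists solely of a citation to Exercise 9 on page 58 of Martinez, so there is no written argument in the text to compare against. What you give is the standard commutator route. The variance decomposition $\sigma(u,X)^2 = \sigma(u,x)^2 + 4\pi^2\lvert x - X\rvert^2$ (and its Fourier-side twin obtained from the intertwining $\mathcal{F}_h[(hD_y-2\pi\Xi)u] = 2\pi(\cdot-\Xi)\mathcal{F}_hu$ and Plancherel) handles the second and third inequalities cleanly. For the main bound, the operators $A_j = 2\pi(y_j-x_j)$ and $B_j = hD_{y_j}-2\pi\xi_j$ are indeed formally self-adjoint on $\mathscr{S}(\mathbb{R}^n)$, $[A_j,B_k]=2\pi ih\,\delta_{jk}$, and the identity $-i\langle[A_j,B_j]u,u\rangle = 2\operatorname{Im}\langle B_ju,A_ju\rangle = 2\pi h$ followed by the pointwise Cauchy--Schwarz inequality gives $\lVert A_ju\rVert\,\lVert B_ju\rVert\geq\pi h$ for each $j$; summing and applying Cauchy--Schwarz once more in the index $j$ produces the factor $n$. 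The only slight cosmetic point: the final chain would read a little more transparently if you noted explicitly that $\sigma(u,x)^2 = \sum_j\lVert A_ju\rVert^2$ and $\sigma(\hat u,\xi)^2 = \sum_j\lVert B_ju\rVert^2$ follow directly from $\lvert 2\pi(y-x)\rvert^2 = \sum_j\lvert 2\pi(y_j-x_j)\rvert^2$; you use this but state it only implicitly. Otherwise the proof is complete and the normalization bookkeeping (the $2\pi$ from the $h$-Fourier phase, yielding $n\pi h$ rather than $nh/2$) is handled correctly.
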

\begin{proof}
The proof is essentially given in \cite[Exercise 9 in Page 58]{Martinez}. 
\end{proof} 
The first inequality of \eqref{equation:heisenberg} 
is said to be the semiclassical Heisenberg inequality. 
Consider the case that the equality holds for this. 
\begin{definition}
\label{theorem:coherent1} 
Fix arbitrary $(x,\xi)\in T^\ast(\mathbb{R}^n)$. 
The function of $y \in\mathbb{R}^n$ given by 
$$
\psi(y;x,\xi,h)
:=
2^{n/4}
h^{-n/4}
e^{-2\pi i (x-y)\xi/h-\pi(x-y)^2/h}
$$
belongs to $\mathscr{S}(\mathbb{R}^n)$,  
and is said to be the coherent state at $(x,\xi)$. 
\end{definition} 
The properties of the coherent states are the following. 
\begin{proposition}
\label{theorem:coherent2} 
\quad
\begin{itemize}
\item 
$\mathcal{F}_h\psi(\cdot;x,\xi,h)(\eta)=2^{n/4}h^{-n/4}e^{-2\pi i x\eta/h-\pi(\xi-\eta)^2/h}$,  
$\lVert{\psi(\cdot;x,y,h)}\rVert_{L^2(\mathbb{R}^n)}=1$, 
$$
\int_{\mathbb{R}^n}
y\lvert{\psi(y;x,\xi,h)}\rvert^2
dy
=
x,
\quad
\int_{\mathbb{R}^n}
\eta\lvert{\mathcal{F}_h\psi(\cdot;x,\xi,h)(\eta)}\rvert^2
d\eta
=
\xi.
$$
\item 
The coherent state attains the equality of 
the first inequality in {\rm \eqref{equation:heisenberg}}. 
More precisely, 
$$
\sigma(\psi(\cdot;x,\xi,h),x)^2
=
\sigma(\hat{\psi}(\cdot;x,\xi,h),\xi)^2
=
n\pi h.
$$
\end{itemize}
\end{proposition}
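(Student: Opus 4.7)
The plan is to verify each assertion by direct Gaussian calculus, using only the identity $\mathcal{F}_h[e^{-\pi y^2/h}](\eta)=e^{-\pi\eta^2/h}$ recalled above, the translation and modulation rules of the $h$-Fourier transform, and the elementary one-dimensional moment identity
\[
\int_{\mathbb{R}}u^2 e^{-2\pi u^2/h}\,du\Big/\int_{\mathbb{R}}e^{-2\pi u^2/h}\,du=\frac{h}{4\pi}.
\]

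First I would rewrite
\[
\psi(y;x,\xi,h)=2^{n/4}h^{-n/4}e^{-2\pi ix\xi/h}\,e^{2\pi iy\xi/h}\,e^{-\pi(y-x)^2/h},
\]
exhibiting the coherent state as a modulation (by frequency $\xi/h$) of a translate (by $x$) of the self-dual ground-state Gaussian. Combining the translation rule $\mathcal{F}_h[f(\cdot-x)](\eta)=e^{-2\pi ix\eta/h}\mathcal{F}_hf(\eta)$ and the modulation rule $\mathcal{F}_h[e^{2\pi iy\xi/h}f(y)](\eta)=\mathcal{F}_hf(\eta-\xi)$ with the stated self-duality of $e^{-\pi y^2/h}$ gives the claimed formula for $\mathcal{F}_h\psi(\cdot;x,\xi,h)(\eta)$, once the two phase factors $e^{-2\pi ix\xi/h}$ and $e^{-2\pi ix(\eta-\xi)/h}$ combine to $e^{-2\pi ix\eta/h}$.

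The $L^2$-normalization then reduces, via $|\psi|^2=2^{n/2}h^{-n/2}e^{-2\pi(y-x)^2/h}$, to the Gaussian identity $\int_{\mathbb{R}^n}e^{-2\pi(y-x)^2/h}\,dy=(h/2)^{n/2}$, which cancels the prefactor exactly. The first moment $\int y|\psi|^2\,dy=x$ follows at once from the parity $\int(y-x)e^{-2\pi(y-x)^2/h}dy=0$. The momentum first moment $\int\eta|\mathcal{F}_h\psi|^2\,d\eta=\xi$ is obtained identically, because the Fourier-side formula just established shows that $|\mathcal{F}_h\psi|^2=2^{n/2}h^{-n/2}e^{-2\pi(\eta-\xi)^2/h}$ is itself a normalized Gaussian centered at $\xi$.

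For the dispersions, I would compute
\[
\sigma(\psi(\cdot;x,\xi,h),x)^2=4\pi^2\cdot 2^{n/2}h^{-n/2}\int_{\mathbb{R}^n}|y-x|^2 e^{-2\pi(y-x)^2/h}\,dy,
\]
split the integral coordinate by coordinate (each coordinate factoring as a one-dimensional Gaussian moment times $(n-1)$ Gaussian normalizations), and apply the moment identity to obtain $4\pi^2\cdot n\cdot\frac{h}{4\pi}=n\pi h$. The momentum-side dispersion $\sigma(\hat{\psi}(\cdot;x,\xi,h),\xi)^2$ is handled in exactly the same way starting from $|\mathcal{F}_h\psi|^2$, which by the first bullet is a normalized Gaussian of the same shape centered at $\xi$. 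No step is a genuine obstacle here, since everything collapses to Gaussian moment computations; the only real caution is to keep the factors of $2\pi$, $h$, and $2^{n/4}$ perfectly balanced in the normalizations, so that the final constant lands on $n\pi h$ rather than being off by a power of $2$.
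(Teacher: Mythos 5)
Your proof is correct and matches the paper's approach: the paper simply asserts that the proposition follows by elementary Gaussian computation reducible to $\int_{\mathbb{R}^n}e^{-\pi y^2}dy=1$ (deferring to Martinez, Chapter~3), and your direct calculation via the translation/modulation rules and the one-dimensional Gaussian moment identity is exactly that elementary computation, carried out in full with the constants correctly balanced.
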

\begin{proof} 
The proof is done by elementary computation. 
Most of them can be reduced to 
$$
\int_{\mathbb{R}^n}e^{-\pi y^2}dy=1. 
$$
We omit the detail. 
See \cite[Chapter~3]{Martinez}.  
\end{proof}
It is worth mentioning that 
\begin{align*}
  \lvert{\psi(y;x,\xi,h)}\rvert
& =
  2^{n/4}h^{-n/4}
  e^{-\pi(x-y)^2/h},
\\
  \lvert{\mathcal{F}_h\psi(\cdot;x,\xi,h)(\eta)}\rvert
& =
  2^{n/4}h^{-n/4}
  e^{-\pi(\xi-\eta)^2/h}, 
\\
  e^{-\pi\xi^2/h}
  \mathcal{T}_hu(x-i\xi)
& =
  h^{-n/2}
  \int_{\mathbb{R}^n}
  u(y)
  \overline{\psi(y;x,\xi,h)}
  dy
\\
& =
  h^{-n/2}
  \int_{\mathbb{R}^n}
  \mathcal{F}_hu(\eta)
  \overline{\mathcal{F}_h\psi(\cdot;x,\xi,h)(\eta)}
  d\eta. 
\end{align*}
The Bargmann transform is 
not only a function rearranged by a certain distribution function, 
but also its $h$-Fourier transform rearranged by a certain distribution function. 
Roughly speaking, 
the position $y$ follows some modified normal distribution with 
mean $x$ and standard deviation $\sqrt{h/2\pi}$, 
and the frequency $\eta$ follows some modified normal distribution with 
mean $\xi$ and standard deviation $\sqrt{h/2\pi}$.  
In other words the Bargmann transform $\mathcal{T}_hu(x,\xi)$ detects 
the microlocal information of $u$ in the Heisenberg box of the form
$$
\left\{y\in\mathbb{R}^n : \lvert{y-x}\rvert\leqq\sqrt{\frac{h}{2\pi}} \right\}
\times
\left\{\eta\in\mathbb{R}^n : \lvert{\eta-\xi}\rvert\leqq\sqrt{\frac{h}{2\pi}} \right\}
$$
at each point $(x,\xi) \in T^\ast(\mathbb{R}^n)$. 
\par
In what follows we prove Theorem~\ref{theorem:br}. 
This follows from the Plancherel formula of the Radon transform $\mathcal{R}$. 
\begin{proposition}[{{\bf The Plancherel Formula} \cite[Page 26]{Helgason}}]
\label{theorem:plancherel} 
For $u,v \in \mathscr{S}(\mathbb{R}^n)$, 
$$
(-1)^{(n-1)/2}C_n
\int_{\mathbb{R}^n}
u(x) 
v(x) 
dx 
= 
\iint_{\mathbb{P}^n}
\lvert{D_t}\rvert^{n-1}\mathcal{R}u(\omega,t)
\cdot
\mathcal{R}v(\omega,t)
d\omega
dt,
$$
where $D_t=-i\partial/\partial t$ 
and $C_n$ is the constant defined by {\rm \eqref{equation:constant}}. 
\end{proposition}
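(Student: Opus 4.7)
My plan is to prove this Plancherel identity by the classical route: the Fourier slice theorem combined with one- and $n$-dimensional Plancherel. First, write $\widetilde{g}(\omega,\tau)$ for the one-dimensional Fourier transform of $g(\omega,t)$ in $t$. A direct application of Fubini's theorem to the definition of $\mathcal{R}u$ gives the Fourier slice identity
$$
\widetilde{\mathcal{R}u}(\omega,\tau)=\widehat{u}(\tau\omega),
$$
with $\widehat{u}$ the $n$-dimensional Fourier transform of $u$ in the same convention.

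Next, I would apply one-dimensional Plancherel in $t$ to the right-hand side of the statement. Since $|D_t|^{n-1}$ is the self-adjoint Fourier multiplier with symbol $|2\pi\tau|^{n-1}$, and $\mathcal{R}v$ is real when $v$ is, the inner $dt$-integral becomes a weighted integral of $\widehat{u}(\tau\omega)\,\widehat{v}(-\tau\omega)$ against $|2\pi\tau|^{n-1}\,d\tau$. Realising $\mathbb{P}^n$ as the quotient of $\mathbb{S}^{n-1}\times\mathbb{R}$ by $(\omega,t)\sim(-\omega,-t)$ introduces a factor $\tfrac12$, which is absorbed by doubling the $\tau$-integral after restricting to $\tau>0$ via the $(\omega,\tau)\mapsto(-\omega,-\tau)$ symmetry of the integrand. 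The remaining measure $\tau^{n-1}\,d\tau\,d\omega$ is exactly the polar-coordinate decomposition of Lebesgue measure on $\mathbb{R}^n$ under $\xi=\tau\omega$, so a final application of $n$-dimensional Plancherel to $\int\widehat{u}(\xi)\widehat{v}(-\xi)\,d\xi$ returns a constant multiple of $\int u(x)v(x)\,dx$, which is the desired pairing.

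The main obstacle will be carefully bookkeeping the constant $(-1)^{(n-1)/2}C_n$ and giving a precise meaning to $|D_t|^{n-1}$ when $n$ is even. For odd $n$, $|D_t|^{n-1}=(-\partial_t^2)^{(n-1)/2}=(-1)^{(n-1)/2}\partial_t^{n-1}$ is a genuine local differential operator and the overall constant together with the sign follow directly from the computation above, with the $\Gamma$-factors in $C_n$ absorbing the sphere-volume normalisation implicit in the polar change of variables. For even $n$, on the other hand, $n-1$ is odd and $|D_t|^{n-1}$ is a non-local operator of odd fractional order that factorises through the Hilbert transform $\mathcal{H}$; this is exactly the mechanism that forces $\mathcal{H}$ to appear in the even-dimensional case of Theorem~\ref{theorem:br}. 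Since this identity is classical, one may alternatively just invoke \cite[p.~26]{Helgason}; what is sketched above is the standard derivation.
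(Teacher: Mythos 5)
The paper itself gives no proof of this proposition; it is quoted directly from Helgason's book \cite[p.~26]{Helgason}, as you yourself note as an alternative at the end. Your outline is a faithful sketch of precisely the derivation Helgason gives there: the Fourier slice identity $\widetilde{\mathcal{R}u}(\omega,\tau)=\widehat u(\tau\omega)$, one-dimensional Parseval in $t$ for the multiplier $\lvert 2\pi\tau\rvert^{n-1}$ (which is the correct symbol of $\lvert D_t\rvert^{n-1}$ in the paper's normalisation $D_t=-i\partial_t$ with the Fourier kernel $e^{-2\pi i t\tau}$), the restriction to $\tau>0$ using the $(\omega,\tau)\mapsto(-\omega,-\tau)$ symmetry of the integrand, the polar change of variables $\xi=\tau\omega$ turning $\tau^{n-1}\,d\tau\,d\omega$ into $d\xi$, and finally $n$-dimensional Parseval. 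So your route and the paper's cited source are the same.

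Two small remarks. First, the parenthetical that ``$\mathcal{R}v$ is real when $v$ is'' is unnecessary: the bilinear Parseval identity $\int f g\,dt=\int\hat f(\tau)\hat g(-\tau)\,d\tau$ used at this step holds for arbitrary Schwartz $f,g$ and requires no reality assumption. Second, you correctly identify the actual delicacies, namely the constant bookkeeping and the meaning of $\lvert D_t\rvert^{n-1}$ for even $n$; these are exactly what the paper handles in the proof of Theorem~\ref{theorem:br} that immediately follows, via the factorisation $\lvert D_t\rvert^{2k-1}=\pi^{-1}(-1)^{k-1}\partial_t^{2k-1}\mathcal{H}$ from \eqref{equation:dt2} and the absorption of all constants into $A_n$. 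A complete proof along your lines would need to carry those constants through carefully, but the skeleton is correct.
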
 
Here we begin to prove Theorem~\ref{theorem:br}. 
\begin{proof}[{\bf Proof of Theorem~\ref{theorem:br}}] 
Let $k$ be a positive integer, 
and let $U(\omega,t)$ belong to $\mathscr{S}(\mathbb{P}^n)$.  
For $n=2k+1$ 
\begin{equation}
\lvert{D_t}\rvert^{n-1}U(\omega,t)
=
\lvert{D_t}\rvert^{2k}U(\omega,t)
=
\left(-\frac{\partial^2}{\partial t^2}\right)^kU(\omega,t)
=
(-1)^k
\frac{\partial^{2k}U}{\partial t^{2k}}(\omega,t), 
\label{equation:dt1}
\end{equation}
and for $n=2k$ 
\begin{align}
  \lvert{D_t}\rvert^{n-1}U(\omega,t)
& =
  \lvert{D_t}\rvert^{2k-1}U(\omega,t)
\nonumber
\\
&  =
  \frac{(-1)^{k-1}}{\pi}
  \frac{\partial^{2k-1}}{\partial t^{2k-1}}
  \operatorname{PV}
  \int_{-\infty}^\infty
  \frac{U(\omega,s)}{t-s}
  ds
\nonumber
\\
& =
  \frac{(-1)^{k-1}}{\pi}
  \frac{\partial^{2k-1}}{\partial t^{2k-1}}
  \mathcal{H}U(\omega,t). 
\label{equation:dt2}
\end{align}
Indeed, the odd case is obvious, 
and the even case follows from the facts that 
the symbol of $\lvert{D_t}\rvert^{2k-1}$ is 
\begin{align*}
  e^{2\pi i t\tau}\lvert{D_t}\rvert^{2k-1}e^{-2\pi i t\tau}
& =
  \lvert{2\pi\tau}\rvert^{2k-1}
  =
 (2\pi i \tau)^{2k-1}(-i)^{2k-1} \operatorname{sgn}(\tau)
\\
& =
  (-1)^{k-1} \{-i \operatorname{sgn}(\tau)\} (2\pi i \tau)^{2k-1} 
\end{align*}
and the usual Fourier transform of the principal value of $1/t$ is 
\begin{equation}
\mathcal{F}_1
\left[\operatorname{PV}\frac{1}{t}\right](\tau)
=
-i\pi\operatorname{sgn}(\tau)
\quad
\text{in}
\quad
\mathscr{S}^\prime(\mathbb{R}).
\label{equation:hilbert}
\end{equation}
\par
Applying Proposition~\ref{theorem:plancherel} to $\mathcal{T}_hu$, 
we deduce that 
\begin{align*}
  \mathcal{T}_hu(z)
& =
  2^{n/4}h^{-3n/4}
  \int_{\mathbb{R}^n}
  e^{-\pi(z-y)^2/h}
  u(y)
  dy
\\
& =
  \frac{2^{n/4}}{(-1)^{(n-1)/2}C_n}
  h^{-3n/4}
  \iint_{\mathbb{P}^n}
  \mathcal{R}[e^{-\pi(z-\cdot)^2/h}](\omega,t)
  \cdot
  \lvert{D_t}\rvert^{n-1}
  \mathcal{R}u(\omega,t)
  d\omega 
  dt.  
\end{align*}
If we set 
$$
\widetilde{\mathcal{R}}u(\omega,t)
:=
\begin{cases}
\mathcal{R}u(\omega,t)
&\ (n=2k+1),
\\
\mathcal{H}\mathcal{R}u(\omega,t)
&\ (n=2k),  
\end{cases}
$$
and use the integration by parts with 
\eqref{equation:dt1} and \eqref{equation:dt2}, 
we have 
$$
\mathcal{T}_hu(z)
 =
A_nh^{-3n/4}
\int_{\mathbb{R}^n}
\left(
\frac{1}{\sqrt{\pi}}
\frac{\partial}{\partial t}
\right)^{n-1}
\mathcal{R}[e^{-\pi(z-\cdot)^2/h}](\omega,t)
\cdot
\widetilde{\mathcal{R}}u(\omega,t)
d\omega 
dt.
$$
So it suffices to show that 
\begin{equation}
\left(
\frac{1}{\sqrt{\pi}}
\frac{\partial}{\partial t}
\right)^{n-1}
\mathcal{R}[e^{-\pi(z-\cdot)^2/h}](\omega,t)
=
e^{-\pi(z\omega-t)^2/h}
H_{n-1}\left(\sqrt{\frac{\pi}{h}}(z\omega-t)\right).
\label{equation:tuas} 
\end{equation}
Firstly we compute the Radon transform of $e^{-\pi(z-y)^2/h}$ in $y$. 
Fix arbitrary $\omega \in \mathbb{S}^{n-1}$, 
and set $z^\prime:=z-(z\omega)\omega$. 
Note that $z^\prime(c\omega)=0$ for any $c\in\mathbb{C}$. 
Then we deduce that 
\begin{align*}
  \mathcal{R}[e^{-\pi(z-\cdot)^2/h}](\omega,t)
& =
  \int_{\omega^\perp}
  e^{-\pi(z-y-t\omega)^2/h}
  dm(y)
\\
&  =
  \int_{\omega^\perp}
  e^{-\pi\{(z^\prime-y)+(z\omega-t)\omega\}^2/h}
  dm(y)
\\
& =
  \int_{\omega^\perp}
  e^{-\pi(z^\prime-y)^2/h-\pi(z\omega-t)^2/h}
  dm(y)
\\
& =
  e^{-\pi(z\omega-t)^2/h}
  \int_{\omega^\perp}
  e^{-\pi(z^\prime-y)^2/h}
  dm(y). 
\end{align*}
Since 
$$
\int_{-\infty}^\infty
e^{-\pi(s-a-ib)^2}
ds
=1
$$
for any $a,b\in\mathbb{R}$, we deduce that 
$$
\mathcal{R}[e^{-\pi(z-\cdot)^2/h}](\omega,t)
=
h^{(n-1)/2}
e^{-\pi(z\omega-t)^2/h}.  
$$
Substitute this into the left hand side of \eqref{equation:tuas}, 
we have 
\begin{align*}
  \left(
  \frac{1}{\sqrt{\pi}}
  \frac{\partial}{\partial t}
  \right)^{n-1}
  \mathcal{R}[e^{-\pi(z-\cdot)^2/h}](\omega,t) 
& =
\left(
\frac{h}{\sqrt{\pi}}
\frac{\partial}{\partial t}
\right)^{n-1}
e^{-\pi(z\omega-t)^2/h}
\\
& =
  \left(-\frac{d}{ds}\right)^{n-1}
  e^{-s^2}
  \bigg\vert_{s=\sqrt{\pi}(z\omega-t)/\sqrt{h}}
\\
& =
  e^{-s^2}H_{n-1}(s)
  \bigg\vert_{s=\sqrt{\pi}(z\omega-t)/\sqrt{h}}
\\
& =
  e^{-\pi(z\omega-t)^2/h}
  H_{n-1}\left(\sqrt{\frac{\pi}{h}}(z\omega-t)\right). 
\end{align*}
This completes the proof. 
\end{proof}
%
%
\section{Holomorphic Fourier integral operators}
\label{section:FIO}
The present section is devoted to proving Theorem~\ref{theorem:bh}. 
We show that $\phi(z,\omega,t)$ is a 
complex-valued phase function with non-degenerate critical points satisfying 
$x\omega=t$, $\xi\ne0$ and $\omega=\xi/\lvert\xi\rvert$.  
We also show that $\operatorname{Im}\phi(z,\omega,t)$ also has degenerate critical points 
but their influence on $\mathcal{B}_h$ is negligible. 
The ellipticity of the amplitude $a_n(z,\omega,t)$ 
at non-degenerate critical points follows automatically. 
Indeed if the assertion for $\phi(z,\omega,t)$ is true, then 
\begin{align*}
  a_n(z,\omega,t)
& =
  H_{n-1}
  \left(-i\sqrt{\frac{\pi}{h}}\lvert\xi\rvert\right)
\\
& =
  (n-1)!
  \sum_{j=0}^{[(n-1)/2]}
  \frac{(-1)^j}{j!(n-1-2j)!}
  \left(-i\sqrt{\frac{\pi}{h}}\lvert\xi\rvert\right)^{n-1-j}
\\
& =
  (-i)^{(n-1)}
  \left(\frac{\pi}{h}\right)^{(n-1)/2}
  \lvert\xi\rvert^{n-1}
  +
  \mathcal{O}(\lvert\xi\rvert^{n-3})
  \ne0 
\end{align*}
at the non-degenerate critical points provided that $\lvert\xi\rvert$ is sufficiently large. 
This shows that $\mathcal{B}_h$ is an elliptic Fourier integral operator. 
\par
In what follows we will work only for $\omega$ near ${}^t[0,\dotsc,0,\pm1]$. 
Set 
$$
\omega
=
\begin{bmatrix}
\omega^\prime \\ \omega_n 
\end{bmatrix}
\in
\mathbb{S}^{n-1},
\quad
\omega^\prime
=
\begin{bmatrix}
\omega_1 \\ \vdots \\ \omega_{n-1} 
\end{bmatrix}
\in
\mathbb{R}^{n-1}, 
\quad
$$
and suppose $(\omega^\prime)^2=\omega_1^2+\dotsb+\omega_{n-1}^2<1/2$. 
Then $\omega_n=\pm\sqrt{1-(\omega^\prime)^2}$. 
In other words, we employ $\zeta=(\omega^\prime,t)$ 
as local coordinates of $\mathbb{P}^n$. 
The plan of our proof of Theorem~\ref{theorem:bh} is as follows. 
\begin{itemize}
\item
Step~1.
\quad
We find the critical points of $\operatorname{Im}\phi(x-i\xi,\omega,t)$. 
As a result we will obtain two cases: 
\begin{itemize}
\item
Case~I.
\quad
$x\omega=t$ and $\xi\omega=0$. 
\item
Case~II.
\quad
$x\omega=t$, 
$\xi\ne0$ 
and 
$(\xi\omega)^\prime_{\omega^\prime}:=\dfrac{\partial}{\partial\omega^\prime}\xi\omega=0$, 
which imply 
$x\omega=t$, 
$\xi\ne0$ 
and 
$\omega=\pm\xi/\lvert\xi\rvert$. 
\end{itemize}
\item
Step~2,
\quad
We check the non-degeneracy $\operatorname{Im}\phi^{\prime\prime}_{\zeta\zeta}>0$ 
at the critical points. 
Case~I is concerned with degenerate critical points.  
The condition of Case~II is the necessary and sufficient condition 
of the non-degenerate critical points of $\phi(z,\omega,t)$. 
\item
Step~3. 
\quad
We show $\det\phi^{\prime\prime}_{z\zeta}\ne0$ at the points of Case~II.  
\item 
Step~4.
\quad
We show that the contribution of $U \in \mathscr{S}^\prime(\widetilde{\mathbb{P}^n})$ near the degenerate critical points to $\mathcal{B}_hU$ is negligible. 
\end{itemize}
\begin{proof}[{\bf Step~1 of Proof of Theorem~\ref{theorem:bh}}] 
We shall find the critical points of $\operatorname{Im}\phi(x-i\xi,\omega,t)$. 
Using \eqref{equation:imphi}, we have 
\begin{align*}
  \operatorname{Im}\phi(x-i\xi,\omega,t)
& =
  \pi(x\omega-t)^2-\pi(\xi\omega)^2
\\
& =
  \pi
  \left\{
  \sum_{j=1}^{n-1}x_j\omega_j 
  \pm 
  x_n\sqrt{1-(\omega^\prime)^2}
  -
  t
  \right\}^2
\\
& -
  \pi
  \left\{
  \sum_{j=1}^{n-1}\xi_j\omega_j 
  \pm 
  \xi_n\sqrt{1-(\omega^\prime)^2}
  \right\}^2.  
\end{align*}
Then we have 
\begin{align*}
& \operatorname{Im}\phi^\prime_\zeta(x-i\xi,\omega,t)
\\
  =
& 2\pi(x\omega-t)
  \frac{\partial}{\partial\zeta}(x\omega-t)
  -
  2\pi(\xi\omega)
  \frac{\partial}{\partial\zeta}(\xi\omega)
\\
  =
& 2\pi(x\omega-t)
  \begin{bmatrix}
  (x\omega)^\prime_{\omega^\prime}
  \\
  -1 
  \end{bmatrix}
  -2\pi(\xi\omega)
  \begin{bmatrix}
  (\xi\omega)^\prime_{\omega^\prime}
  \\
  0 
  \end{bmatrix}
\\
  =
& 2\pi(x\omega-t)
  \begin{bmatrix}
  x^\prime\mp\dfrac{x_n}{\sqrt{1-(\omega^\prime)^2}}\omega^\prime 
  \\
  -1
  \end{bmatrix}  
  -
  2\pi(\xi\omega)
  \begin{bmatrix}
  \xi^\prime\mp\dfrac{\xi_n}{\sqrt{1-(\omega^\prime)^2}}\omega^\prime 
  \\
  0
  \end{bmatrix}.
\end{align*}
The $n$-th row of the right hand side of the above is $-2\pi(x\omega-t)$. 
This shows that the critical points must satisfy $x\omega=t$. 
If $x\omega=t$, then
$$
\operatorname{Im}\phi^\prime_\zeta(x-i\xi,\omega,t)
=
-
2\pi(\xi\omega)
\begin{bmatrix}
(\xi\omega)^\prime_{\omega^\prime}
\\
0
\end{bmatrix}
=
-
2\pi(\xi\omega)
\begin{bmatrix}
\xi^\prime\mp\dfrac{\xi_n}{\sqrt{1-(\omega^\prime)^2}}\omega^\prime 
\\
0
\end{bmatrix}. 
$$
Hence the critical points must satisfy 
$\xi\omega=0$ or $(\xi\omega)^\prime_{\omega^\prime}=0$  
under the condition $x\omega=t$. 
Note that $\xi=0$ can be seen as the special case of $\xi\omega=0$, 
and that the critical points must satisfy Case~I or Case~II. 
Here we see the detail of Case~II: $\xi\ne0$ and $(\xi\omega)^\prime_{\omega^\prime}=0$. 
Recall that $\omega_n=\pm\sqrt{1-(\omega^\prime)^2}$ near $\pm1$. 
Then we have 
$$
0
=
(\xi\omega)^\prime_{\omega^\prime}
=
\xi^\prime-\frac{\xi_n}{\omega_n}\omega^\prime,
\quad
\xi
=
\begin{bmatrix}
\xi^\prime \\ \xi_n 
\end{bmatrix}
=
\frac{\xi_n}{\omega_n}\omega  
$$
and $\omega=\pm\xi/\lvert\xi\rvert$. 
\end{proof}
\begin{proof}[{\bf Step~2 of Proof of Theorem~\ref{theorem:bh}}]
We shall check the non-degeneracy $\operatorname{Im}\phi^{\prime\prime}_{\zeta\zeta}>0$ 
at the points of Case~I and Case~II. 
Since the critical points must satisfy $x\omega=t$, we have 
\begin{align*}
  \operatorname{Im}\phi^{\prime\prime}_{\zeta\zeta}(x-i\xi,\omega,t)
& =
  2\pi
  \begin{bmatrix}
  (x\omega)^\prime_{\omega^\prime}
  \\
  -1 
  \end{bmatrix}
  \begin{bmatrix}
  {}^t(x\omega)^\prime_{\omega^\prime} & -1 
  \end{bmatrix} 
\\
& -
  2\pi
  \begin{bmatrix}
  (\xi\omega)^\prime_{\omega^\prime}{}^t(\xi\omega)^\prime_{\omega^\prime} & 0
  \\
  0 & 0
  \end{bmatrix}
  -
  2\pi
  \begin{bmatrix}
  (\xi\omega)(\xi\omega)^{\prime\prime}_{\omega^\prime \omega^\prime} & 0
  \\
  0 & 0 
  \end{bmatrix} 
\\
& =
  2\pi
  \begin{bmatrix}
  (x\omega)^\prime_{\omega^\prime}{}^t(x\omega)^\prime_{\omega^\prime} - A
  & 
  -(x\omega)^\prime_{\omega^\prime}
  \\ 
  -{}^t(x\omega)^\prime_{\omega^\prime}
  & 
  1
  \end{bmatrix}, 
\\
  A
& =
  (\xi\omega)^\prime_{\omega^\prime}{}^t(\xi\omega)^\prime_{\omega^\prime}
  +
  (\xi\omega)(\xi\omega)^{\prime\prime}_{\omega^\prime \omega^\prime}
\end{align*}
for $x\omega=t$. 
Let $E_{n-1}$ be the $(n-1)\times(n-1)$ identity matrix. Set 
$$
P=P(x,\omega)
:=
\begin{bmatrix}
E_{n-1} & 0
\\
{}^t(x\omega)^\prime_{\omega^\prime} & 1 
\end{bmatrix}. 
$$
Then $\det{P}=1$. 
By using $P(x,\omega)$ we deduce that  
$$
{}^tP(x,\omega) 
  \operatorname{Im}\phi^{\prime\prime}_{\zeta\zeta}(x-i\xi,\omega,t)
  P(x,\omega) 
=
2\pi
\begin{bmatrix}
-A & 0
\\
0 & 1 
\end{bmatrix}
$$
for $x\omega=t$, 
and that $\operatorname{Im}\phi^{\prime\prime}_{\zeta\zeta}>0$ is equivalent to $-A>0$. 
\par
Firstly we will check $-A>0$ for Case~I. 
Suppose that $x\omega=t$ and $\xi\omega=0$. 
Then $-A=-(\xi\omega)^\prime_{\omega^\prime}{}^t(\xi\omega)^\prime_{\omega^\prime}$. 
For any 
$y \in \mathbb{R}^{n-1} \cap \bigl((\xi\omega)^\prime_{\omega^\prime}\bigr)^\perp\setminus\{0\}$, 
${}^ty(-A)y=-\bigl\{y(\xi\omega)^\prime_{\omega^\prime}\bigr\}^2=0$. 
Hence the points of Case~I cannot be the non-degenerate critical points. 
\par
Secondly we will check $-A>0$ for Case~II. 
Suppose that $x\omega=t$ and $(\xi\omega)^\prime_{\omega^\prime}=0$, that is, 
$x\omega=t$, $\xi_n\ne0$ and $\xi=\xi_n\omega/\omega_n$. 
Then $\xi\omega=\xi_n/\omega_n\ne0$, and 
\begin{align*}
  -A
& =
  -
  (\xi\omega)
  (\xi\omega)^{\prime\prime}_{\omega^\prime\omega^\prime}
  =
  -
  \frac{\xi_n}{\omega_n}
  \frac{\partial}{\partial\omega^\prime}(\xi\omega)^\prime_{\omega^\prime}
  =
  -
  \frac{\xi_n}{\omega_n}
  \frac{\partial}{\partial\omega^\prime}
  \left(
  \xi^\prime
  -
  \frac{\xi_n}{\omega_n}
  \omega^\prime
  \right)
\\
& =
  \frac{\xi_n}{\omega_n}
  \frac{\partial}{\partial\omega^\prime}
  \left(
  \frac{\xi_n}{\omega_n}
  \omega^\prime
  \right)
  =
  \pm
  \frac{\xi_n}{\sqrt{1-(\omega^\prime)^2}}
  \frac{\partial}{\partial\omega^\prime}
  \left(
  \pm
  \frac{\xi_n}{\sqrt{1-(\omega^\prime)^2}}
  \omega^\prime
  \right)
\\
& =
  \frac{\xi_n}{\sqrt{1-(\omega^\prime)^2}}
  \frac{\partial}{\partial\omega^\prime}
  \left(
  \frac{\xi_n}{\sqrt{1-(\omega^\prime)^2}}
  \omega^\prime
  \right)
\\
& =
  \frac{\xi_n}{\sqrt{1-(\omega^\prime)^2}}
  \left(
  \frac{\xi_n}{\sqrt{1-(\omega^\prime)^2}}
  E_{n-1}
  -
  \frac{\xi_n\omega^\prime(-2{}^t\omega^\prime)}{2\bigl(1-(\omega^\prime)^2\bigr)^{3/2}}
  \right)
\\
& =
  \frac{\xi_n^2}{1-(\omega^\prime)^2}
  \left(
  E_{n-1}
  +
  \frac{\omega^\prime {}^t\omega^\prime}{1-(\omega^\prime)^2}
  \right)
  =
  \frac{\xi_n^2}{\omega_n^2}
  \left(
  E_{n-1}
  +
  \frac{\omega^\prime {}^t\omega^\prime}{\omega_n^2}
  \right).
\end{align*}
We deduce that for any $y \in \mathbb{R}^{n-1}\setminus\{0\}$, 
$$
{}^ty(-A)y
=
\frac{\xi_n^2}{\omega_n^2}
\left(
\lvert{y}\rvert^2
+
\frac{(y\omega)^2}{\omega_n^2}
\right)
\geqq
\frac{\xi_n^2}{\omega_n^2}\lvert{y}\rvert^2
>0, 
$$
which shows that $-A>0$. 
Thus the non-degenerate critical points are characterized by the conditions of Case~II, 
and $\operatorname{Im}\phi^{\prime\prime}_{\zeta\zeta}>0$ holds at these points. 
\end{proof}
\begin{proof}[{\bf Step~3 of Proof of Theorem~\ref{theorem:bh}}]
We shall show that $\det\phi^{\prime\prime}_{z\zeta}(z,\omega,t)\ne0$ 
at the non-degenerate critical points. We first compute 
$\phi^{\prime\prime}_{z\zeta}(z,\omega,t)$. 
Elementary computation gives 
\begin{align*}
  \phi^\prime_z(z,\omega,t) 
& =
  2\pi i(z\omega-t)\omega,
\\
  \phi^{\prime\prime}_{z\zeta}(z,\omega,t)
& =
  2\pi i 
  \begin{bmatrix}
  \omega^\prime \\ \omega_n 
  \end{bmatrix}
  \begin{bmatrix}
  {}^tz^\prime-z_n{}^t\omega^\prime/\omega_n & -1
  \end{bmatrix}
  +
  2\pi i (z\omega-t)
  \begin{bmatrix}
  E_{n-1} & 0
  \\
  -{}^t\omega^\prime/\omega_n & 0 
  \end{bmatrix}
\\
& =
  2\pi i
  \begin{bmatrix}
  \omega^\prime{}^tz^\prime-z_n\omega^\prime{}^t\omega^\prime/\omega_n 
  & 
  -\omega^\prime
  \\
  \omega_n{}^tz^\prime-z_n{}^t\omega^\prime & -\omega_n 
  \end{bmatrix}
  +
  2\pi i (z\omega-t)
  \begin{bmatrix}
  E_{n-1} & 0
  \\
  -{}^t\omega^\prime/\omega_n & 0 
  \end{bmatrix}. 
\end{align*}
Substitute $x\omega=t$ and 
$\xi^\prime=\xi_n\omega^\prime/\omega_n$ 
into the above. We have 
$$
\phi^{\prime\prime}_{z\zeta}(z,\omega,t)
=
2\pi i
\begin{bmatrix}
\omega^\prime{}^tz^\prime-z_n\omega^\prime{}^t\omega^\prime/\omega_n 
& 
-\omega^\prime
\\
\omega_n{}^tz^\prime-z_n{}^t\omega^\prime & -\omega_n 
\end{bmatrix}
+
2\pi 
\frac{\xi_n}{\omega_n}
\begin{bmatrix}
E_{n-1} & 0
\\
-{}^t\omega^\prime/\omega_n & 0 
\end{bmatrix}. 
$$
Multiplying $\phi^{\prime\prime}_{z\zeta}(z,\omega,t)$ 
by some regular matrices, 
we modify it to see that its determinant does not vanish. 
Set 
$$
L
:=
\begin{bmatrix}
E_{n-1} & -\omega^\prime/\omega_n
\\
0 & 1 
\end{bmatrix}, 
\quad
Q
:=
\begin{bmatrix}
E_{n-1} & 0
\\
{}^tz^\prime & 1 
\end{bmatrix},
\quad
R
:=
\begin{bmatrix}
E_{n-1} & 0
\\
z_n{}^t\omega^\prime/\omega_n & 1 
\end{bmatrix}. 
$$
Then $\det{L}=\det{Q}=\det{R}=1$. 
We deduce that 
\begin{align*}
& L \phi^{\prime\prime}_{z\zeta}(z,\omega,t) QR
\\
  =
& 2\pi
  L
  \left\{
  i
  \begin{bmatrix}
  \omega^\prime{}^tz^\prime-z_n\omega^\prime{}^t\omega^\prime/\omega_n 
  & 
  -\omega^\prime
  \\
  \omega_n{}^tz^\prime-z_n{}^t\omega^\prime & -\omega_n 
  \end{bmatrix}
  +
  \frac{\xi_n}{\omega_n}
  \begin{bmatrix}
  E_{n-1} & 0
  \\
  -{}^t\omega^\prime/\omega_n & 0 
  \end{bmatrix}
  \right\}
  \begin{bmatrix}
  E_{n-1} & 0
  \\
  {}^tz^\prime & 1 
  \end{bmatrix}
  R
\\
  =
& 2\pi
  \begin{bmatrix}
  E_{n-1} & -\omega^\prime/\omega_n
  \\
  0 & 1 
  \end{bmatrix}
  \left\{
  i
  \begin{bmatrix}
  -z_n\omega^\prime{}^t\omega^\prime/\omega_n 
  & 
  -\omega^\prime
  \\
  -z_n{}^t\omega^\prime & -\omega_n 
  \end{bmatrix}
  +
  \frac{\xi_n}{\omega_n}
  \begin{bmatrix}
  E_{n-1} & 0
  \\
  -{}^t\omega^\prime/\omega_n & 0 
  \end{bmatrix}
  \right\}
  R
\\
  =
& 2\pi
  \left\{
  i
  \begin{bmatrix}
  O
  & 
  0
  \\
  -z_n{}^t\omega^\prime & -\omega_n 
  \end{bmatrix}
  +
  \frac{\xi_n}{\omega_n}
  \begin{bmatrix}
  E_{n-1}+\omega^\prime{}^t\omega^\prime/\omega_n^2 & 0
  \\
  -{}^t\omega^\prime/\omega_n & 0 
  \end{bmatrix}
  \right\}
  \begin{bmatrix}
  E_{n-1} & 0
  \\
  z_n{}^t\omega^\prime/\omega_n & 1 
  \end{bmatrix}
\\
  =
& 2\pi
  \left\{
  i
  \begin{bmatrix}
  O
  & 
  0
  \\
  0 & -\omega_n 
  \end{bmatrix}
  +
  \frac{\xi_n}{\omega_n}
  \begin{bmatrix}
  E_{n-1}+\omega^\prime{}^t\omega^\prime/\omega_n^2 & 0
  \\
  -{}^t\omega^\prime/\omega_n & 0 
  \end{bmatrix}
  \right\}
\\
  =
& 2\pi
  \begin{bmatrix}
  \dfrac{\xi_n}{\omega_n}
  \left(E_{n-1}+\dfrac{\omega^\prime{}^t\omega^\prime}{\omega_n^2}\right)
  & 0
  \\
  -\dfrac{{}^t\omega^\prime}{\omega_n} 
  & 
  -i\omega_n
  \end{bmatrix} 
\end{align*}
and 
\begin{align*}
  \det\phi^{\prime\prime}_{z\zeta}(z,\omega,t)
& =
  \det\bigl(L \phi^{\prime\prime}_{z\zeta}(z,\omega,t) QR\bigr)
\\
& =
  (2\pi)^n
  \left(\frac{\xi_n}{\omega_n}\right)^{(n-1)}
  (-i\omega_n)
  \det\left(E_{n-1}+\dfrac{\omega^\prime{}^t\omega^\prime}{\omega_n^2}\right).  
\end{align*}
In Step~2 we have already proved that 
$E_{n-1}+\omega^\prime{}^t\omega^\prime/\omega_n^2>0$, 
and then we obtain $\det\phi^{\prime\prime}_{z\zeta}(z,\omega,t)\ne0$ 
at all the non-degenerate critical points. 
\end{proof}
\begin{proof}[{\bf Step~4 of Proof of Theorem~\ref{theorem:bh}}]
Fix arbitrary $(x_0,\xi_0) \in T^\ast(\mathbb{R}^n)\setminus0$. 
Let $\rho$ be a small positive number satisfying 
$\rho\leqq\min\{\lvert\xi_0\rvert/2,1\}$. 
We denote by $B_\rho(a)$ the open $n$-ball of radius $\rho$ and center $a \in \mathbb{R}^n$. 
We observe the behavior of $\mathcal{B}_hU(x-i\xi)$ for $(x,\xi) \in B_\rho(x_0){\times}B_\rho(\xi_0)$. In particular we evaluate the influence of the degenerate critical points of the phase function. Note that $0 \not\in B_\rho(\xi_0)$ since $\xi_0\ne0$ and $0<\rho\leqq\lvert\xi_0\rvert/2$. 
\par
We introduce subsets of $\mathbb{S}^{n-1}$ and bounded intervals in $\mathbb{R}$ to define a cut-off function of $(\omega,t)\in\mathbb{P}^n$. 
Set 
\begin{align*}
  S_\rho(\xi_0)
& :=
  \left\{
  \frac{\xi}{\lvert\xi\rvert}
  \ : \ \xi \in B_\rho(\xi_0)
  \right\},
\\
  S_\rho^\perp(\xi_0)
& :=
  \left\{
  \omega\in\mathbb{S}^{n-1}
  \ : \ \nu\omega=0 \ \text{with some}\ \nu \in S_\rho(\xi_0)
  \right\}.
\end{align*}
We have $\lvert\nu\omega\rvert<1/4$ 
for any $\nu \in S_\rho(\xi_0)$ and for any $\omega \in S_\rho^\perp(\xi_0)$ 
provided that $\rho$ is sufficiently small. 
If we set 
$$
C_\rho(\xi)
:=
\{
\omega\in\mathbb{S}^{n-1}
\ : \ 
\lvert\nu\omega\rvert<1/2 
\ \text{for any}\ \nu \in S_\rho(\xi_0)
\}, 
$$
then $\overline{S_\rho^\perp(\xi_0)} \subset C_\rho(\xi_0)$ in $\mathbb{S}^{n-1}$. 
Finally we set 
\begin{align*}
  T_0
& =
  \sup
  \{
  \lvert{x}\omega\rvert
  \ : \ 
  x \in B_\rho(x_0), 
  \omega \in S_\rho^\perp(\xi_0)
  \},
\\
  T_1
& =
  \sup
  \{
  \lvert{x}\omega\rvert
  \ : \ 
  x \in B_\rho(x_0), 
  \omega \in C_\rho(\xi_0)
  \}. 
\end{align*}
Then $0<T_0<T_1$. 
We here remark that all the degenerate critical points $(\omega,t)$ 
for $(x,\xi) \in B_\rho(x_0){\times}B_\rho(\xi_0)$, 
which are given by  $x\omega=t$ and $\xi\omega~$, are contained in 
$S_\rho^\perp(\xi_0)\times[-T_0,T_0]$. 
If $(t,\omega) \in S_\rho^\perp(\xi_0)\times[-T_0,T_0]$, 
then $(-t,-\omega) \in S_\rho^\perp(\xi_0)\times[-T_0,T_0]$ 
and therefore $S_\rho^\perp(\xi_0)\times[-T_0,T_0]$ 
can be regarded as a subset of $\mathbb{P}^n$. 
\par
We pick up 
$\chi_1(\omega) \in C^\infty(\mathbb{S}^{n-1})$ 
and
$\chi_2(t) \in C^\infty(\mathbb{R})$ 
such that 
$$
0 \leqq \chi_1(\omega), \chi_2(t) \leqq 1, 
\quad
\chi_1(-\omega)=\chi_1(\omega), 
\quad
\chi_2(-t)=\chi_2(t), 
$$
$$
\chi_1(\omega)
=
\begin{cases}
1, &\ \omega \in S_\rho^\perp(\xi_0),
\\
0, &\ \omega \not\in C_\rho(\xi_0), 
\end{cases}
\quad
\chi_2(t)
=
\begin{cases}
1, &\ t\in[-T_0,T_0],
\\
0 &\ t\not\in(-T_1,T_1).   
\end{cases}
$$
If we set $\chi(\omega,t):=\chi_1(\omega)\chi_2(t)$, 
then $\chi(\omega,t)$ is a smooth function on $\mathbb{P}^n$, 
$\operatorname{supp}[\chi]$ is contained in 
$\overline{C_\rho(\xi_0)}\times[-T_1,T_1]$, 
and $\operatorname{supp}[1-\chi]$ is contained in 
$\mathbb{P}^n\setminus\bigl(S_\rho^\perp(\xi_0)\times(-T_0,T_0)\bigr)$. 
Let $U \in \mathscr{S}^\prime(\widetilde{\mathbb{P}^n})$. 
We split $\mathcal{B}_hU$ into two parts: 
$$
\mathcal{B}_hU=\mathcal{B}\bigl((1-\chi)U\bigr)+\mathcal{B}(\chi{U}).
$$
The first term $\mathcal{B}\bigl((1-\chi)U\bigr)(x-i\xi)$ 
of the right hand side of the above 
is independent of the degenerate critical points for 
$(x,\xi) \in B_\rho(x_0){\times}B_\rho(\xi_0)$. 
So we evaluate $e^{-\pi\xi^2/h}\mathcal{B}(\chi{U})(x-i\xi)$ 
for $(x,\xi) \in B_\rho(x_0){\times}B_\rho(\xi_0)$. 
\par
We now recall 
\begin{align*}
  \mathcal{B}_h(\chi{U})(x-i\xi) 
& =
  A_nh^{-3n/4}
  \iint_{C_\rho(\xi_0)\times[-T_1,T_1]}
  U(\omega,t)\Phi(\omega,t;x,\xi)
  d\omega
  dt,
\\
  \Phi(\omega,t;x,\xi)
& =
  e^{-\pi(x\omega-t-i\xi\omega)^2/h}
  H_{n-1}
  \left(\sqrt{\frac{\pi}{h}}(x\omega-t-i\xi\omega)\right)
  \chi(\omega,t). 
\end{align*}
Rigorously speaking, this is a pairing of a distribution $U$ 
and a test function $\Phi(\cdot,\cdot;x,\xi)$ with parameters $(x,\xi)$ on $\mathbb{P}^n$. 
Here we recall that $\lvert\xi\omega\rvert\leqq\lvert\xi\rvert/2$ 
for $\xi \in B_\rho(\xi_0)$ and $\omega \in C_\rho(\xi_0)$. 
Then we deduce that there exist constants $C_j$ ($j=1,2,3$) 
and a positive integer $N$ such that 
for $(x,\xi) \in B_\rho(x_0){\times}B_\rho(\xi_0)$
\begin{align*}
  \lvert\mathcal{B}_h(\chi{U})(x-i\xi)\rvert
& \leqq
  C_1h^{-3n/4}
  \sum_{k=0}^N
  \sup_{(\omega,t)\in C_\rho(\xi_0)\times[-T_1,T_1]}
  \lvert\nabla_{\omega,t}^k\Phi(\cdot,\cdot;x,\xi)\rvert
\\
& \leqq
  C_2h^{-3n/4}(1+\lvert{x}\rvert+\lvert{\xi}\rvert)^{2N}
  \sup_{(\omega,t)\in C_\rho(\xi_0)\times[-T_1,T_1]}
  e^{-\pi(x\omega-t)^2/h+\pi(\xi\omega)^2/h}
\\
& \leqq
  C_3h^{-3n/4}e^{\pi\xi^2/4h}, 
\end{align*}
where $\nabla_{\omega,t}F(\omega,t)$ is the gradient of $F(\omega,t)$. 
Here we recall that $\lvert\xi\rvert\geqq\lvert\xi_0\rvert/2$ for $\xi \in B_\rho(\xi_0)$ 
since $0<\rho\leqq\lvert\xi_0\rvert/2$. 
Thus we obtain 
$$
e^{-\pi\xi^2/h}\lvert\mathcal{B}_h(\chi{U})(x-i\xi)\rvert
\leqq
C_3h^{-3n/4}e^{-3\pi\xi^2/4h}
\leqq
C_3h^{-3n/4}e^{-3\pi\xi_0^2/16h}
\leqq
C_4e^{-\pi\xi_0^2/8h}
$$
as $h \downarrow 0$ uniformly for $(x,\xi) \in B_\rho(x_0){\times}B_\rho(\xi_0)$ with some constant $C_4$. This completes the proof. 
\end{proof}
%
%
\section{Invariant expression of canonical transform}
\label{section:WF}
Finally we shall prove Theorem~\ref{theorem:kb}. 
Firstly we obtain $\kappa_B^{-1}(\Lambda_\Phi)$. 
For this purpose we find 
the invariant expression of ${}^t[\eta,\tau]=-\phi^\prime_\zeta$. 
In other words, we will look for the form 
which is independent of the choice of the local coordinates of 
$\mathbb{P}^n$. 
Secondly we will compute 
$\kappa_B(\omega,t,2\pi\tau\eta,2\pi\tau)$ 
by using $\kappa_B^{-1}(\Lambda_\Phi)$. 
\begin{proof}[{\bf Proof of Theorem~\ref{theorem:kb}}] 
Fix arbitrary $\omega\in\mathbb{S}^{n-1}$, 
and pick up an orthonormal basis $\{\nu_1,\dotsc,\nu_{n-1},\omega\}$ of $\mathbb{R}^n$. 
We introduce 
$$
\mu
=
\mu(s_1,\dotsc,s_{n-1}) 
:=
s_1\nu_1+\dotsb+s_{n-1}\nu_{n-1}+\sqrt{1-s_1^2-\dotsb-s_{n-1}^2}\omega
\in \mathbb{S}^{n-1}
$$
with $s_1^2+\dotsb+s_{n-1}^2<1/2$. 
We consider $\bigl(\mu(s_1,\dotsc,s_{n-1}),t\bigr) \in \mathbb{P}^n$ 
near $(\omega,t)$ with local coordinates 
$(s^\prime,t)=(s_1,\dotsc,s_{n-1},t)$. 
In the same way as Part~1 of Proof of Theorem~\ref{theorem:bh}, 
we can deduce that the critical point of the form 
$(\omega,t)=\bigl(\mu(0),t\bigr)$ must satisfy 
$$
x\omega=t,
\quad
\xi\ne0,
\quad
(\xi\omega)^\prime_{s^\prime}
:=
\frac{\partial}{\partial s^\prime}(\xi\mu(s^\prime))\Bigg\vert_{s^\prime=0}
=0.
$$
We compute the last condition of the above. 
Since $\sqrt{1-\sigma}=1+\mathcal{O}(\sigma)$ near $\sigma=0$, 
we deduce that 
\begin{equation}
\mu(s^\prime)
=
\omega
+
s_1\nu_1+\dotsb+s_{n-1}\nu_{n-1}
+
\mathcal{O}\bigl((s^\prime)^2\bigr)
\label{equation:jurong}
\end{equation}
near $s^\prime=0$. Then 
$$
\xi\mu(s^\prime)
=
\xi\omega+s_1\xi\nu_1+\dotsb+s_{n-1}\xi\nu_{n-1}
+
\mathcal{O}\bigl((s^\prime)^2\bigr)
$$
and $(\xi\omega)^\prime_{s^\prime}=0$ implies that 
$\xi\nu_1=\dotsb=\xi\nu_{n-1}=0$. 
Hence we obtain $\xi=(\xi\omega)\omega$ 
and $\omega=\pm\xi/\lvert\xi\rvert$. 
Next we compute $-\phi^\prime_\zeta$ at 
$\zeta=(s^\prime,t)$ with $s^\prime=0$. 
Using \eqref{equation:jurong} again, we deduce that 
\begin{align*}
  -\phi^\prime_\zeta\bigl(z,\mu(s^\prime),t\bigr)
  \Big\vert_{x\omega=t, \omega=\pm\xi/\lvert\xi\rvert, s^\prime=0}
& = 
  2\pi i \bigl(z\mu(s^\prime)-t\bigr)
  \begin{bmatrix}
  -\bigl(z\mu(s^\prime)\bigr)^\prime_{s^\prime}
  \\
  1 
  \end{bmatrix}
  \Bigg\vert_{x\omega=t, \omega=\pm\xi/\lvert\xi\rvert, s^\prime=0}. 
\end{align*}
Using $\xi\nu_1=\dotsb=\xi\nu_{n-1}=0$ again, we have 
$$
-
z\mu(s^\prime)
=
-
z\omega
-
s_1x\nu_1
-
\dotsb
-
s_{n-1}x\nu_{n-1}
+
\mathcal{O}\bigl((s^\prime)^2\bigr). 
$$
Hence we obtain 
$$
-\phi^\prime_\zeta\bigl(z,\mu(s^\prime),t\bigr)
\Big\vert_{x\omega=t, \omega=\pm\xi/\lvert\xi\rvert, s^\prime=0}
= 
2\pi i (-i\xi\omega)
\begin{bmatrix}
-x\nu_1
\\
\vdots
\\
-x\nu_{n-1}
\\
1 
\end{bmatrix}
\Bigg\vert_{\omega=\pm\xi/\lvert\xi\rvert}
=
\pm
2\pi\lvert\xi\rvert
\begin{bmatrix}
-x\nu_1
\\
\vdots
\\
-x\nu_{n-1}
\\
1 
\end{bmatrix}.
$$
Here we recall that 
$-{}^t\phi^\prime_\zeta\in\omega^\perp\times\mathbb{R}$. 
So we can express 
${}^t[-x\nu_1,\dotsc,-x\nu_{n-1}]\in\mathbb{R}^{n-1}$ 
as 
$$
-
\sum_{j=1}^{n-1}
(x\nu_j)\nu_j
=
-x+(x\omega)\omega
=
-x+\frac{(x\xi)\xi}{\lvert\xi\rvert^2}
\quad
\text{in}
\quad
\omega^\perp.
$$
The right hand side in the above equation is 
invariant of the choice of the local coordinates of $\mathbb{P}^n$. 
We see this as the invariant expression of $\eta$ of ${}^t[\eta,\tau]$ 
which is the fiber variables of $T^\ast(\mathbb{P}^n)$. 
Thus we have 
$$
\kappa_B(x-i\xi,2\pi\xi)
=
\pm
\left(
\frac{\xi}{\lvert\xi\rvert}, 
\frac{x\xi}{\lvert\xi\rvert}, 
2\pi\lvert\xi\rvert
\left(
-x+\frac{(x\xi)\xi}{\lvert\xi\rvert^2}
\right),
2\pi\lvert\xi\rvert
\right)
$$
for $(x,\xi) \in T^\ast(\mathbb{R}^n)\setminus0$. 
\par
Finally we consider the inverse of $\kappa_B^{-1}$. 
Suppose that 
$(\omega,t,\eta^\prime,\tau^\prime) \in T^\ast(\mathbb{P}^n)$ 
satisfies 
$$
\kappa_B(\omega,t,\eta^\prime,\tau^\prime)
=
(x-i\xi,2\pi\xi)\in\Lambda_\Phi,
\quad
(x,\xi) \in T^\ast(\mathbb{R}^n)\setminus0.   
$$
Then we have 
$$
\omega
=
\pm
\frac{\xi}{\lvert\xi\rvert}, 
\quad
t
=
\pm
\frac{x\xi}{\lvert\xi\rvert},
\quad
\eta^\prime
=
\pm
2\pi\lvert\xi\rvert
\left(
-x+\frac{(x\xi)\xi}{\lvert\xi\rvert^2}
\right),
\quad
\tau^\prime
=
\pm
2\pi\lvert\xi\rvert.
$$
This implies that 
$$
t
=
x\omega,
\quad
t\omega
=
\frac{(x\xi)\xi}{\lvert\xi\rvert^2}, 
\quad
\eta^\prime
=
\tau^\prime(-x+t\omega), 
\quad
\tau^\prime\omega
=
2\pi\xi.
$$
Here we use new variables 
$(\omega,t,2\pi\tau\eta,2\pi\eta) \in \mathbb{P}^n$, 
that is, $\tau^\prime=2\pi\tau$ and $\eta^\prime=2\pi\tau\eta$. 
Then we have 
$$
t
=
x\omega,
\quad
t\omega
=
\frac{(x\xi)\xi}{\lvert\xi\rvert^2}, 
\quad
\eta
=
-x+t\omega, 
\quad
\tau\omega
=
\xi. 
$$
This is valid for $\tau\ne0$ since $\xi\ne0$. 
Hence we have 
$$
\kappa_B(\omega,t,2\pi\tau\eta,2\pi\tau)
=
(t\omega-\eta-i\tau\omega,2\pi\tau\omega)
$$
for $\tau\ne0$. 
Note that this is valid also for the case of $\tau=0$.  
This completes the proof. 
\end{proof}
%
%
\begin{center}
{\sc Acknowledgment} 
\end{center}
\par
The author would like to express his sincere gratitude to the referees who carefully read the manuscript and provided many valuable comments. 
%
%

\end{document}